\newtheorem{theorem}{Theorem}[section]
\newtheorem{lemma}[theorem]{Lemma}
\newtheorem{proposition}[theorem]{Proposition}
\theoremstyle{definition}
\newtheorem{definition}[theorem]{Definition}
\newtheorem{example}[theorem]{Example}
\theoremstyle{remark}
\numberwithin{equation}{section}
\newcommand\N{\mathbb{N}}
\newcommand\R{\mathbb{R}}
\newcommand\T{\mathbb{T}}
\newcommand\C{\mathbb{C}}
\newcommand\J{\mathbb{J}}
\newcommand\Z{\mathbb{Z}}
\newcommand\cH{\mathcal{H}}
\newcommand\cK{\mathcal{K}}
\newcommand\cG{\mathcal{G}}
\newcommand\cB{\mathcal{B}}
\newcommand\cM{\mathcal{M}}
\def\sideremark#1{\ifvmode\leavevmode\fi\vadjust{\vbox
to0pt{\vss \hbox to 0pt{\hskip\hsize\hskip1em
\vbox{\hsize2cm\tiny\raggedright\pretolerance10000
\noindent#1\hfill}\hss}\vbox to8pt{\vfil}\vss}}}
\begin{document}

\title[Dilations for Systems of Imprimitivity acting on Banach Spaces]{Dilations for Systems of Imprimitivity acting on Banach Spaces}

\author{Deguang Han}
\address{Department of Mathematics, University of Central
Florida, Orlando, USA} \email{deguang.han@ucf.edu}

\author{David R. Larson}
\address{Department of Mathematics, Texas A\&M University, College Station, USA}
\email{larson@math.tamu.edu}

\author{Bei Liu}
\address{Department of Mathematics, Tianjin University of Technology, Tianjin, China}
\email{beiliu1101@gmail.com}

\author{Rui Liu}
\address{Department of Mathematics and LPMC, Nankai University, Tianjin, China}
\email{ruiliu@nankai.edu.cn}

\begin{abstract} Motivated by  a general dilation theory for operator-valued measures, framings and bounded linear maps on operator algebras, we consider the dilation theory of the above objects with special structures. We show that every operator-valued system of imprimitivity
has a dilation to a probability spectral system of imprimitivity acting on a Banach space. This completely generalizes a well-kown result which states that every frame representation
of a countable group on a Hilbert space is unitarily equivalent to a subrepresentation of the left regular representation of the group.
The dilated space in general can not be taken as a Hilbert space. However, it can be taken as a Hilbert space for
positive operator valued systems of imprimitivity. We also prove that isometric group representation induced framings on a Banach space can be dilated to unconditional bases with the same structure for a larger Banach space This extends several known results on the dilations of frames induced by unitary group representations on Hilbert spaces.

\end{abstract}

\thanks{The authors were all participants in the NSF funded Workshop
in Analysis and Probability at Texas A\&M University. Deguang Han acknowledges partial support by NSF grant DMS-1106934. Bei Liu and Rui Liu  both are supported by NSFC grants 11201336 and 11001134}

\date{}


\keywords{Dilation; System of imprimitivity; Banach space;
Projective isometric representation; Operator-valued measure;
Frame.}

\maketitle

\section{Introduction}

In a recent paper
\cite{HLLL}  we developed a general dilation theory for operator-valued
measures, and for bounded linear maps on von Neumann algebras. It is well known 
that, if $A$ is a $C^*$-algebra with unit and $\phi:A\to B(H)$
is a completely bounded map, then there exists a Hilbert space $K$,
a $*$-homomorphism $\pi:A\to B(K)$, and bounded operators $V_i:H\to K, i=1,2,$
with $\|\phi\|_{\mathrm{cb}}=\|V_1\|\cdot\|V_2\|$ such that
\[\phi(a)=V_1^* \pi(a)V_2\] for all $a\in A$ (see Theorem 8.4 in \cite{Pa}). 
Thus, a bounded linear map from a unital $C^*$-algebra into $B(H)$ has a Hilbert space
dilation to a $*$-homomorphism if and only if the mapping is completely bounded
(The necessary part follows from the complete boundedness of $*$-homomorphisms).
The theory tells us that even if the bounded map is not
completely bounded it still has a Banach space dilation to a
homomorphism, and any operator valued measure has a Banach space dilation.
These results can be viewed as generalizations of the known result of Casazza, Han
and Larson in \cite{CHL} that arbitrary framings have Banach space dilations, and also
the known result that completely bounded maps have Hilbert space
dilations. This paper continues the investigation of the dilation theory for operator valued measures and bounded linear
maps between operator algebras. Our focus will be on the dilation theory for structured
operator-valued measures associated with systems of imprimitivity.
This is partially motivated by the dilation results for discrete
structured frames (cf. \cite{GH, GH2, Han1, Han2, HL}), and our new
dilation theorems discussed in \cite{HLLL, HLLL-CM, LS}.

The concept of a
system of imprimitivity was introduced by Mackey \cite{M, Mac-book}  for
his theory of induced representations of locally compact groups. It is used in algebra and analysis in the theory of
group representations. The theory of systems of imprimitivity includes both
the finite dimensional case and the infinite dimensional case. In Mackey's theory,  every
infinite dimensional system of imprimitivity is affiliated with a
projection-valued measure on a Hilbert space. Motivated by the Banach space dilation
aspects of operator valued measures developed in \cite{HLLL}, we develop a dilation theory
of operator valued systems of imprimitivity based on semi-groups acting on both Hilbert and Banach spaces. In section 3, we will prove that
every operator-valued system of imprimitivity
on $(S, \Sigma)$ can be dilated to a probability spectral system of imprimitivity, where $\Sigma$ is a $\sigma$-field of subsets of a set $\Omega$ and $S$
a sub-semigroup of a $\Sigma$-measurable group $G$. Even in the case that the projective operator-valued system of imprimitivity
on $(S, \Sigma)$ is based on a Hilbert space, the dilated probability projective spectral system of imprimitivity in general is based on a Banach space. This Banach
space restriction seems necessary because there is an example of a Hilbert space based operator-valued measure which has a Banach space dilation but not a
Hilbert space dilation. (See Theorem E and the subsequent discussion in the introduction of \cite{HLLL}).
In the special case of
positive operator valued systems of imprimitivity acting on a Hilbert space we show that they can be dilated to (orthogonal) projection valued systems of imprimitivity. We remark that the proof of our main dilation theorem (Theorem 3.1) relies on the existence of a minimal dilation space and a minimal dilation norm that were introduced in \cite{HLLL}.

Related to the dilation of group representation frame generators to wandering vector or more generally Riesz vectors the first named author of this paper proved a
geometric structural theorem for the dilations of dual frame pairs induced by a group representation acting on a Hilbert space. The proof of this theorem (especially
for the subspace dual frame pair case) very much involves techniques from the theory of group von Neumann algebras \cite{Han1, Han2}. We extend the above result in section 4 to framings (a concept
that  generalizes dual frame pairs and captures the Banach space nature of the frame dilation theory) that are induced by isometric representations of a countable group on Banach spaces. We prove that any such a framing has
an unconditional basis dilation of the same structure. Moreover, the dilation Banach space can be explicitly constructed.

\section{Preliminaries and Examples}

The concept of infinite dimensional systems of imprimitivity in
Hilbert spaces can be generalized to Banach space cases. The ingredients are
 a semigroup $S$, a
$\sigma$-field $\Sigma$ of subsets of a set $\Omega$, and a
measurable semigroup action \[S\times\Sigma\to \Sigma, \ \
(s,E)\mapsto sE,\] that satisfies that for all $E\in\Sigma$ and
$s,t\in S$:
\begin{enumerate}
\item $s(t E)=(st)E$
\item $e E=E$, $s\emptyset=\emptyset$, $s\Omega=\Omega$
\item $s(\bigcup_{n=1}^\infty
E_n)=\bigcup_{n=1}^\infty s E_n$
\item
$s(\bigcap_{n=1}^\infty E_n)= \bigcap_{n=1}^\infty s E_n$.
\end{enumerate}
We refer to this as a \emph{$\Sigma$-measurable $S$-space}, denoted
by $(S,\Sigma)$,  for example,$(\N,\R^+)$.


In this paper the term \emph{semigroup} will signify a semigroup with unit.
We shall usually write the operation multiplicatively and denote the
unit by $e$. In particular, a subsemigroup $S$ of a $\Sigma$-measurable
group $G$ is a $\Sigma$-measurable $S$-space with the relative measure.

A \emph{multiplier} on a semigroup $S$ is a function $\omega$, from
the Cartesian product $S\times S$ to the unit circle $\T$ in the
complex plane $\C$, such that for all elements $s, t$ and $u$ of
$S$:
\begin{enumerate}
  \item $\omega(e,s)=\omega(s,e)=1$;
  \item $\omega(s,t)\omega(st,u)=\omega(s,tu)\omega(t,u)$.
\end{enumerate}
If $S$ is a group, then if follows from (1) and (2) that for all
$s\in S$ , we have $\omega(s,s^{-1})=\omega(s^{-1},s).$ If a
multiplier $\omega$ satisfies
$\omega(s,s^{-1})=\omega(s^{-1},s)=1$ for all $s\in S$, we say that  $\omega$ is
\emph{symmetric}.

We generalize the concept of projective isometric representations (see \cite{Mu}) from
Hilbert spaces to Banach spaces. Let $S$ be a semigroup and $X$ a Banach space. A \emph{projective
isometric representation} of $S$ on $X$ is a map, $W:s\mapsto W_s$,
from $S$ to $B(X)$ having the following properties for all elements
$s, t\in S$:
\begin{enumerate}
  \item $W_s$ is an isometry and $W_e=1$;
  \item $W_sW_t=\omega(s,t)W_{st}$, where $\omega(s,t)$ are scalars of unit modulus..
\end{enumerate}
It follows from the equations that the function,
$\omega:(s,t)\mapsto\omega(s,t),$ is a multiplier on $S$, it is called
the multiplier \emph{associated to} $W$.

If all the $W_s$ are surjective isometries, we say that $W$ is a
\emph{projective isometric isomorphism representation}. In this
case, if the associated multiplier $\omega$ is symmetric, then for
all $s\in S$ $W_{s^{-1}}=W_s^{-1}$.

A projective isometric representation or projective isometric
isomorphism representation with $\omega$ an associated multiplier
will sometimes be referred to as an \emph{isometric
$\omega$-representation} or, respectively, an \emph{isometric
isomorphism $\omega$-representation}.

We remark that if $S$ is a group, then a projective
isometric representation is automatically a projective
isometric  isomorphism representation.

Let $X$ and $Y$ be Banach spaces, and let $(\Omega,\Sigma)$ be a
measurable space. A \emph{$B(X,Y)$-valued measure on $\Omega$} is a
map $E:\Sigma\to B(X,Y)$ that is countably additive in the weak
operator topology; that is, if $\{B_i\}$ is a disjoint countable
collection of members of $\Sigma$ with union $B$, then
\begin{equation*}
y^{*}(E(B)x)=\sum_i y^{*} (E(B_i)x)
\end{equation*}
for all $x\in X$ and $y^*\in Y^*.$
The Orlicz-Pettis theorem  states that weak unconditional
convergence and norm unconditional convergence of a series are the
same in every Banach space (c.f.\cite{DJT}). Thus  we have  that
$\sum_i E(B_i)x$ weakly unconditionally converges to $E(B)x$ if
and only if $\sum_i E(B_i)x$ strongly unconditionally converges to
$E(B)x.$ So it is equivalent to saying that
$E$ is strongly countably additive, that is, if $\{B_i\}$ is a
disjoint countable collection of members of $\Sigma$ with union
$B$, then
\begin{equation*}
E(B)x=\sum_i E(B_i)x
\end{equation*}
for all $x\in X$.

A $B(X)$-valued measure $E$ on $(\Omega,\Sigma)$ is called:
\begin{enumerate}
\item[(i)] an operator-valued probability measure if
$E(\Omega)=\mathrm{I}_X,$

\item[(ii)]a projection-valued measure if $E(B)$ is a projection
on $X$ for all $B\in\Sigma$,

\item[(iii)] a spectral operator-valued measure if for all $A,
B\in \Sigma, E(A\cap B)=E(A)\cdot E(B)$ (we will also use the term
idempotent-valued measure to mean a spectral-valued measure.)
\end{enumerate}
It is an elementary fact that a $B(X)$-valued measure which is a
projection-valued measure is always a spectral-valued measure (c.f.
\cite{HLLL, Pa}). Note that spectral
operator-valued measures are clearly projection-valued measures. So (ii) and (iii) are equivalent.

Let $\mathcal {G}$ be a locally compact Hausdorff topological group
acting on a measurable space $(\Omega,\Sigma)$. A  (orthogonal) \emph{projection-valued system of
imprimitivity} (respectively, \emph{positive operator-valued system of
imprimitivity})  based on $(\mathcal {G},(\Omega,\Sigma))$ consists of
a separable Hilbert space $\cH$ and a pair consisting of a
strongly-continuous unitary representation $U:g\mapsto U_g$ of
$\mathcal {G}$ on $\cH$, and a (orthogonal)  projection-valued measure (respectively, a positive operator-valued measure) $\pi$ on
the measurable subsets of $\Omega$ with values in the projections (respectively, positive operators) on
$\cH$, which satisfy
\[U_g \,\pi(E) U_{g^{-1}}=\pi(g\cdot E) \quad \mbox{ for all }
g\in \mathcal {G} \mbox{ and } E\in \Sigma.\]

\begin{example}\label{ex:1} Let $\mu$ be a left Haar measure on the Borel subsets
$\mathcal{B}$ of a locally compact Hausdorff topological group
$\cG$, and  $U:\cG\to B(\cH)$ be a strongly-continuous unitary representation
of $\cG$ on a separable Hilbert space $\cH$. Then $f\in \cH$ is
called a \emph{Bessel vector} if there is a constant $C>0$ such that
\[\int_\cG |\langle x, U_g f \rangle_\cH|^2 d \mu(g)\le C \|x\|^2
\quad \mbox{ for all } x\in \cH.\]
For a Bessel vector $f\in \cH$, we define
$\varpi_f:\mathcal{B}\rightarrow B(\cH)$ by
\[\varpi_f(E):=\int_{E} U_g f \otimes U_g f\,
 d\mu(g) \quad \mbox{ for all } E\in \mathcal{B},\]
that is,
\[\langle\varpi_f(E)x,y\rangle_\cH:=\int_{E}\langle x,
U_g f\rangle_\cH \langle U_g f, y\rangle_\cH\, d\mu(g)
\quad \mbox{ for all } x,y\in \cH,\]
or equivalently,
\[\varpi_f(E)x:=\int_{E}\langle x,
U_g f\rangle_\cH U_g f \,d\mu(g)
\quad \mbox{ for all } x\in \cH.\]
Then $\varpi_f$ is a
positive-operator valued measure on $(\cG,\mathcal{B})$.


Moreover, $(U, \varpi_f)$ is a positive operator-valued system of
imprimitivity.
Indeed, for any $g\in\cG$, $E\in\cB$, and $x,y\in\cH$,
we have
\begin{eqnarray*}
\langle U_g\varpi_f(E)U_{g^{-1}}x, y \rangle &=& \langle
\varpi_f(E)U_{g^{-1}}x, U_{g^{-1}} y \rangle\\
&=&\int_{E} \langle U_{g^{-1}}x, U_{g'}f\rangle\cdot
\langle U_{g'}f, U_{g^{-1}} y\rangle \,d\mu(g')\\
&=&\int_{E} \langle x, U_{g}U_{g'}f\rangle\cdot \langle
U_g U_{g'}f, y\rangle \,d\mu(g')\\
&=&\int_{E} \langle x, U_{g\cdot g'}f\rangle\cdot \langle
U_{g\cdot g'}f, y\rangle \,d\mu(g')\\
&=&\int_{g\cdot E} \langle x, U_{g'}f\rangle\cdot \langle
U_{g'}f, y\rangle \,d\mu(g')\\
&=&\langle \varpi_f(g\cdot E)x, y \rangle.
\end{eqnarray*}
Thus  $U_g\varpi_f(E)U_{g^{-1}}=\varpi_f(g\cdot E)$,
and so $(U, \varpi_f)$ is a positive operator-valued system of
imprimitivity of
$(\cG,\mathcal{B})$ on $\cH$.

\end{example}

The following example is the continuous wavelet transform.

\begin{example}\label{ex:H64}
Let $\Omega=\{(a,b):a>0,\, b\in\mathbb{R}\}$. The action on $\Omega$ is defined by
$(a,b)(s,t)=(as,b+at).$ For any $(a,b)\in \Omega$ and $E\in \mathcal{B},$ we have
\[\iint\limits_{(a,b)E}1\frac{dsdt}{s^{2}}=\iint\limits_{E}1\frac{dsdt}{s^{2}}\]
The left Haar measure $\mu$ on the Borel subsets $\mathcal{B}$ of $\Omega$ is $dsdt/s^2$. Let $\cH=L^2(\mathbb{R})$. For $(a,b)\in \Omega,$
define
\[U_{a,b}:L^2(\mathbb{R})\to L^2(\mathbb{R}), \quad U_{a,b}(f)=a^{-1/2}f(\frac{x-b}{a}).\]
Then $U$ is a strongly continuous unitary representation of $\Omega$ on the separable Hilbert space $L^2(\mathbb{R})$. From the wavelet theory,
we have the following identity for the continuous wavelet transform
(\cite[Proposition 2.4.1]{D}):

For all $f,g\in L^{2}(\mathbb{R})$, we have
\begin{equation*}\label{eq:Hb2}
\iint\limits_{\Omega}\langle f,U_{a,b}(h)\rangle\overline{\langle g,U_{a,b}(h)\rangle}\frac{dadb}{a^{2}} =C_{h}\langle f,g\rangle,
\end{equation*}
Where $h\in L^{2}(\mathbb{R})$ satisfies
\begin{equation}\label{eq:Hb21}
C_{h}=\int^{+\infty}_0\frac{1}{|\omega|}|\hat{h}(\omega)|^{2}d\omega\neq 0.
\end{equation}
 Thus for any $h\in L^{2}(\mathbb{R})$ satisfies (\ref{eq:Hb21}), we
 have
\begin{equation*}\label{eq:Hb3}
\iint\limits_{\Omega}\left|\langle f,U_{a,b}(h)\rangle\right|^2\frac{dadb}{a^{2}} =C_{h}\|f\|^2.
\end{equation*}  Define
\[\varpi_h(E)(f)=\iint\limits_{E}\langle f,U_{a,b}(h)\rangle U_{a,b}(h)
\frac{dadb}{a^{2}}.\] Then $\varpi_h$ is a positive-operator-valued
measure, and so $(U, \varpi_h)$ is a positive operator-valued system of
imprimitivity of
$(\Omega,\mathcal{B})$ on $L^2(\mathbb{R})$.
\end{example}

\section{Dilation of operator-valued systems
of imprimitivity}

Let $S$ be a semigroup acting on a $\sigma$-field $\Sigma$ of
subsets of a set $\Omega$. A \emph{projective isometric operator-valued  system
of imprimitivity} based on $(S,\Sigma)$ consists of a Banach space
$X$ and a pair consisting of:
\begin{enumerate}
\item A projective isometric representation $W:s\mapsto W_s$ of $S$ on
$X$;
\item An operator-valued measure $\varphi$ on
$\Sigma$ with values in the operators on $X$,
\end{enumerate}
satisfying that for all $s\in S$ and $E\in\Sigma$ \[W_s
\varphi(E)=\varphi(s E) W_s.\] A projective isometric operator-valued system
of imprimitivity $(W,\varphi)$ is denoted \emph{probability} if $W$
is a probability operator-valued measure.

A \emph{projective isometric spectral system of imprimitivity} based on $(S,
\Sigma)$ consists of $X$ and a pair consisting of:
\begin{enumerate}
\item A projective isometric representation $W:s\mapsto W_s$ of $S$ on $X$;
\item A spectral measure $\rho$ on
$\Sigma$ with values in the projections on $X$,
\end{enumerate}
satisfying that for all $s\in S$ and $E\in\Sigma$ \[W_s
\rho(E)=\rho(s E) W_s.\] A projective isometric operator-valued system of
imprimitivity or projective isometric  spectral system of imprimitivity with
$\omega$ an associated multiplier will sometimes be referred to as
an \emph{operator-valued $\omega$-system of imprimitivity} or,
respectively, an \emph{spectral $\omega$-system of imprimitivity}.

\begin{example}For some structured frames that include Gabor frames, recall that a projective unitary
 representation $\pi$ for a countable group $G$ is a mapping $g\to \pi(g)$ from $G$ into the set of unitary
operators on a Hilbert space $H$ such that $\pi(g)\pi(h)=\mu(g,h)\pi(gh)$ for all
$g,h\in G$, where $\mu(g,h)$ belongs to the circle group $\mathbb{T}$. The mapping
$(g,h)\to\mu(g,h)$ is then called a multiplier of $\pi$. The image of a projective unitary
representation is also called a group-like unitary system. If a projective representation
$\pi$ on a Hilbert space $H$ admits a frame vector $\xi$, i.e., $\{\pi(g)\xi\}_{g\in G}$
is a frame for $H$, then $\pi$ is called a frame representation. From the proof in Example \ref{ex:1}, it is
easy to see that $\{\pi(g)\xi\}_{g\in G}$ induces a projective operator-valued isometric system
of imprimitivity.
\end{example}


Let $(W,\varphi)$ be an operator-valued $\omega$-system of
imprimitivity of $(S, \Sigma)$ on a Banach space $X$ and $(V,\rho)$
a spectral $\omega$-system of imprimitivity of $(S, \Sigma)$ on a
Banach space $Z.$ Then $(V,\rho)$ is said to be a \emph{dilation} of
$(W,\varphi)$ if there are bounded operators $Q:Z\to X$ and $T:X\to
Z$ such that for all $s\in S$ and $E\in\Sigma$:
\begin{enumerate}
  \item $\varphi(E)=Q\rho(E)T$;
  \item $Q V_s=W_s Q$;
  \item $V_s T=T W_s$.
\end{enumerate}
In this case, $(V,\rho,Q,T)$ is called a \emph{dilation system} of
$(W,\varphi)$. The representation of $S$ on $X$ can be viewed as a subrepresentation
of the representation of $S$ on $Z$.

The following is the first main result of this paper. It says in particular that a representation of a group on a Banach space which is suitably affiliated with an operator-valued
probability measure is a subrepresentation of one which is affiliated with a projection-valued probability measure. This completely generalizes a theorem from \cite{HL} which states that every frame representation
of a countable group on a Hilbert space is unitarily equivalent to a subrepresentation of the left regular representation of the group.

\begin{theorem} \label{main-thm1} Let $\Sigma$ be a $\sigma$-field of subsets of a set $\Omega$ and $S$
a $\Sigma$-measurable semigroup. Then every projective isometric operator-valued system of imprimitivity
of $(S,\Sigma)$ can be dilated to a probability projective  isometrc spectral system of imprimitivity.
\end{theorem}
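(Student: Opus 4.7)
The plan is to invoke the Banach-space dilation machinery of \cite{HLLL} to dilate the operator-valued measure $\varphi$ to a spectral probability measure $\rho$, and then to transport the isometric representation $W$ to the dilation space in a canonical way. Concretely, apply the dilation theorem from \cite{HLLL} to $\varphi:\Sigma\to B(X)$, obtaining a Banach space $Z$, a spectral probability measure $\rho:\Sigma\to B(Z)$, and bounded operators $Q:Z\to X$ and $T:X\to Z$ with $\varphi(E)=Q\rho(E)T$. I would take $Z$ to be the \emph{minimal} dilation space, whose dense algebraic subspace is spanned by simple symbols $\chi_E\otimes x$ satisfying
\begin{equation*}
Tx=\chi_\Omega\otimes x,\qquad Q(\chi_E\otimes x)=\varphi(E)x,\qquad \rho(F)(\chi_E\otimes x)=\chi_{E\cap F}\otimes x.
\end{equation*}

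On this algebraic span, define $V_s(\chi_E\otimes x):=\chi_{sE}\otimes W_s x$ and extend linearly. Well-definedness follows from the semigroup action axioms ($s$ preserves disjoint unions and intersections, $s\emptyset=\emptyset$, $s\Omega=\Omega$), so the defining relations of the minimal dilation space are respected. Direct computation on generators, using $s(E\cap F)=sE\cap sF$, the imprimitivity identity $W_s\varphi(E)=\varphi(sE)W_s$, and the multiplier identity $W_sW_t=\omega(s,t)W_{st}$, then yields at the algebraic level
\begin{equation*}
V_s\rho(F)=\rho(sF)V_s,\qquad V_sV_t=\omega(s,t)V_{st},\qquad QV_s=W_sQ,\qquad V_sT=TW_s.
\end{equation*}

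The main obstacle is showing that each $V_s$ extends to an \emph{isometry} of $Z$. Here I would exploit the explicit description of the minimal dilation norm in \cite{HLLL}, which computes $\|\sum_i\chi_{E_i}\otimes x_i\|_Z$ by a universal formula depending only on the Boolean-algebraic structure of $(E_i)$ and the values $\|x_i\|_X$. Since $V_s$ replaces $x_i$ by the isometric image $W_s x_i$ and replaces $E_i$ by $sE_i$---a map respecting disjointness, unions, and intersections---the same universal formula should return identical values for $u$ and for $V_s u$, giving $\|V_s u\|_Z=\|u\|_Z$. In the genuine semigroup case, where $E\mapsto sE$ may fail to be a bijection of $\Sigma$, the fallback is a renorming argument: define $N(z):=\sup_{s\in S}\|V_s z\|_Z$; once an a priori uniform bound $\|V_s\|\le C$ is established from the universal property together with $\|W_s\|=1$, $N$ is equivalent to $\|\cdot\|_Z$ and every $V_s$ is automatically a contraction under $N$, while the intertwining $QV_s=W_sQ$ combined with $\|W_s\|=1$ upgrades contraction to isometry. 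With $V$ so established, the pair $(V,\rho)$ is a probability projective isometric spectral $\omega$-system of imprimitivity on $Z$, and $(V,\rho,Q,T)$ is the required dilation system of $(W,\varphi)$.
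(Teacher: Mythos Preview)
Your overall architecture matches the paper's: build the minimal dilation space $\mathcal{M}_\varphi$ from \cite{HLLL}, define $V_s(\varphi_{x,E})=\varphi_{W_sx,\,sE}$ on generators, and verify the algebraic identities $V_s\rho(F)=\rho(sF)V_s$, $V_sV_t=\omega(s,t)V_{st}$, $QV_s=W_sQ$, $V_sT=TW_s$. That part is fine and is exactly what the paper does.

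The gap is in your isometry argument. Your description of the minimal dilation norm is incorrect: it is \emph{not} a ``universal formula depending only on the Boolean-algebraic structure of $(E_i)$ and the values $\|x_i\|_X$.'' The actual formula is
\[
\Bigl\|\sum_i \varphi_{x_i,E_i}\Bigr\|_\alpha=\sup_{F\in\Sigma}\Bigl\|\sum_i \varphi(F\cap E_i)x_i\Bigr\|_X,
\]
which depends on $\varphi$ and on the vectors $x_i$ themselves, not just on their norms. Consequently, the fact that $E_i\mapsto sE_i$ respects the Boolean structure and $x_i\mapsto W_sx_i$ preserves norms is not by itself enough. The paper's computation uses the imprimitivity relation $W_s\varphi(E)=\varphi(sE)W_s$ in an essential way: one writes $\varphi(F\cap sE_i)W_sx_i=\varphi\bigl(s(s^{-1}F\cap E_i)\bigr)W_sx_i=W_s\varphi(s^{-1}F\cap E_i)x_i$, pulls the isometry $W_s$ outside the sum, and then reindexes the supremum over $F$. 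You never invoke the imprimitivity relation in the norm computation, and without it the argument does not go through.

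Your fallback renorming argument also fails. Passing to $N(z)=\sup_{s\in S}\|V_sz\|$ at best makes each $V_s$ a contraction; the claim that ``$QV_s=W_sQ$ combined with $\|W_s\|=1$ upgrades contraction to isometry'' is unjustified, since $Q$ is neither injective nor norm-preserving in general. Moreover, renorming can destroy the spectral-measure property of $\rho$ (the idempotents $\rho(E)$ need not remain uniformly bounded in the new norm) and the boundedness of $Q,T$. The correct route is the direct computation with the explicit $\alpha$-norm and the imprimitivity identity, as above.
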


Since the proof is lengthy and technical, we divide it into several lemmas and propositions.

\begin{lemma}\label{le:446}
Let $(W,\rho)$ be a projective isometric spectral  system of imprimitivity of
$(S,\Sigma)$ on a Banach space $X$. Then
\begin{enumerate}
  \item $\rho(\Omega)X$ is an invariant subspace of $W$;
  \item The restriction $(W|_{\rho(\Omega)X}, \rho|_{\rho(\Omega)X})$ is a
probability projective  isometric spectral system of imprimitivity of
$(S,\Sigma)$ on $\rho(\Omega)X$.
\end{enumerate}
\end{lemma}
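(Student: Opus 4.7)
My plan is to use the covariance relation $W_s \rho(E) = \rho(sE) W_s$ together with the axioms $s\Omega = \Omega$ and the spectral identity $\rho(A)\rho(B) = \rho(A \cap B)$.

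For part (1), I would specialize the covariance relation to $E = \Omega$. Since $\Sigma$-measurability of the $S$-action gives $s\Omega = \Omega$ for every $s \in S$, we get $W_s \rho(\Omega) = \rho(\Omega) W_s$. Applying both sides to an arbitrary $x \in X$ yields $W_s(\rho(\Omega) x) = \rho(\Omega)(W_s x) \in \rho(\Omega) X$, which is exactly what invariance asks for. The subspace $\rho(\Omega)X$ is closed because $\rho(\Omega)$ is an idempotent with $\rho(\Omega)^2 = \rho(\Omega \cap \Omega) = \rho(\Omega)$, hence a bounded linear projection, whose range is closed.

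For part (2), I would check one by one the ingredients in the definition of a probability projective isometric spectral system of imprimitivity on the Banach space $Y := \rho(\Omega)X$. First, $W|_Y$ lands in $Y$ by part (1), so each $W_s|_Y$ is a well-defined isometry (isometries restrict to isometries), $W_e|_Y = I_Y$, and the relation $W_s W_t = \omega(s,t) W_{st}$ passes verbatim to $Y$, so $W|_Y$ is a projective isometric representation with the same multiplier $\omega$. Next, for any $E \in \Sigma$, spectrality gives $\rho(E)\rho(\Omega) = \rho(E \cap \Omega) = \rho(E)$, so $\rho(E)(Y) \subseteq Y$ and $\rho|_Y(E) := \rho(E)|_Y$ is a well-defined projection on $Y$. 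Countable additivity and multiplicativity are inherited from $\rho$, so $\rho|_Y$ is a spectral measure on $Y$. For the probability condition, any $y \in Y$ has the form $y = \rho(\Omega) x$, so $\rho|_Y(\Omega) y = \rho(\Omega)\rho(\Omega) x = \rho(\Omega) x = y$, giving $\rho|_Y(\Omega) = I_Y$.

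Finally, the covariance relation $W_s \rho(E) = \rho(sE) W_s$ holds on all of $X$ and both sides preserve $Y$ by the computations above, so restricting to $Y$ gives $W_s|_Y \, \rho|_Y(E) = \rho|_Y(sE) \, W_s|_Y$, completing the verification. I do not anticipate any real obstacle here; the entire argument is bookkeeping, and the only substantive step is noticing that the axiom $s\Omega = \Omega$ is what makes $\rho(\Omega) X$ invariant under $W$.
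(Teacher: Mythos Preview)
Your proposal is correct and follows essentially the same approach as the paper: both arguments use the covariance relation together with the axiom $s\Omega=\Omega$ to obtain invariance of $\rho(\Omega)X$, and then verify that the restricted pair inherits all the required properties. Your derivation of part (1) is actually slightly more direct than the paper's (you immediately conclude $W_s\rho(\Omega)=\rho(\Omega)W_s$, whereas the paper passes through the chain $\rho(s\Omega)=\rho(\Omega\cap s\Omega)=\rho(\Omega)\rho(s\Omega)$), but the substance is identical.
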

\begin{proof} Since $\rho(\Omega)$ is a projection, $\rho(\Omega)X$
is a Banach space. For all $s\in S$,
\[W_s\rho(\Omega)X=\rho(s\Omega)W_s X=\rho(\Omega\cap s\Omega)W_s X
=\rho(\Omega)\rho(s\Omega)W_s X\subset\rho(\Omega)X.\] It follows
that $\rho(\Omega)X$ is an invariant subspace of $W$, then
$W|_{\rho(\Omega)X}$, denoted by $\widehat{W}$, is a projective
isometric representation of $S$ on $\rho(\Omega)X$. It is clear that
$\rho(\Omega)X$ is also an invariant subspace of $\rho$, and
$\rho|_{\rho(\Omega)X}$, denoted by $\hat{\rho}$, is a probability
spectral
measure on $\rho(\Omega)X$. 
%
For all $s\in S$ and $E\in\Sigma$ we have
\[\widehat{W}_s\hat{\rho}(E)=W_s\rho(E)|_{\rho(\Omega)X}=
\rho(s E)W_s|_{\rho(\Omega)X}=\hat{\rho}(s E)\widehat{W}_s.\]
Thus, $(\widehat{W}, \hat{\rho})$ is a probability projective isometric
spectral system of imprimitivity.
\end{proof}

%

\begin{lemma} The subspace $\overline{Q(Z)}$ is invariant subspace under
$(W,\varphi)$, and the restriction $(W|_{\overline{Q(Z)}},
\varphi|_{\overline{Q(Z)}})$ is a projective isometric operator-valued system
of imprimitivity of $(S, \Sigma)$ on $\overline{Q(Z)}$.
\end{lemma}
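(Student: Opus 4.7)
The plan is to verify both invariance assertions by direct computation using the three intertwining identities $\varphi(E)=Q\rho(E)T$, $QV_s=W_sQ$, and $V_sT=TW_s$ that define the dilation system $(V,\rho,Q,T)$, and then to check that the restrictions still satisfy the axioms of a projective isometric operator-valued system of imprimitivity on the closed subspace $\overline{Q(Z)}$.

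First I would establish that $Q(Z)$ itself, before passing to the closure, is invariant under every $W_s$ and every $\varphi(E)$. For $W_s$, the relation $W_sQ=QV_s$ immediately yields $W_s(Qz)=Q(V_sz)\in Q(Z)$ for every $z\in Z$. For $\varphi(E)$, the factorization $\varphi(E)=Q\rho(E)T$ actually shows something stronger: the range of $\varphi(E)$ on all of $X$ is already contained in $Q(Z)$, since $\varphi(E)x=Q(\rho(E)Tx)$. Passing from $Q(Z)$ to $\overline{Q(Z)}$ then uses only the boundedness of the operators involved: because $W_s$ and $\varphi(E)$ are continuous, they each send $\overline{Q(Z)}$ into $\overline{Q(Z)}$.

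It then remains to verify the structural axioms for the restrictions $\widetilde{W}:=W|_{\overline{Q(Z)}}$ and $\widetilde{\varphi}:=\varphi|_{\overline{Q(Z)}}$. Restricting an isometry to a closed invariant subspace preserves the isometric property, and the cocycle relation $W_sW_t=\omega(s,t)W_{st}$ together with $W_e=I$ restricts verbatim, so $\widetilde{W}$ is still a projective isometric $\omega$-representation with the same multiplier. Strong countable additivity of $\widetilde{\varphi}$ is inherited because the identity $\varphi(B)x=\sum_i\varphi(B_i)x$ for disjoint $B_i$ with union $B$ is valid on all of $X$, hence in particular for $x\in\overline{Q(Z)}$. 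The covariance relation $W_s\varphi(E)=\varphi(sE)W_s$ likewise restricts.

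I do not expect a real obstacle; the argument is a formal unwinding of the intertwining identities together with the fact that a bounded operator preserves the closure of an invariant subspace. The only mild point worth flagging is that the factorization through $Q$ already forces $\varphi(E)(X)\subseteq Q(Z)$, so the ``invariance'' of $\overline{Q(Z)}$ under $\varphi(E)$ is automatic rather than a genuine constraint, and this observation will presumably be reused in subsequent steps of the proof of Theorem~\ref{main-thm1}.
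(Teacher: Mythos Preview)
Your proposal is correct and follows essentially the same approach as the paper: both arguments reduce to showing $Q(Z)$ is invariant by using $W_sQ=QV_s$ for the $W$-invariance and the factorization $\varphi(E)=Q\rho(E)T$ for the $\varphi$-invariance, then pass to the closure by continuity. Your observation that in fact $\varphi(E)(X)\subseteq Q(Z)$ (not merely $\varphi(E)(Q(Z))\subseteq Q(Z)$) is slightly sharper than what the paper records, but the underlying computation is the same.
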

\begin{proof} It is sufficient to prove that $Q(Z)$ is an invariant subspace of
$(W,\varphi)$. For all $s\in S$, $W_sQ(Z)=QV_s(Z)\subset Q(Z)$. For
all $E\in\Sigma$ we have \[\varphi(E)Q(Z)=Q\rho(E)TQ(Z)\subseteq
Q(Z).\] Thus, $(W|_{\overline{Q(Z)}}, \varphi|_{\overline{Q(Z)}})$
is a projective isometric  operator-valued system of imprimitivity.
\end{proof}



\begin{lemma}\label{pr:448}
Let $(W,\varphi)$ be a projective isometric operator-valued system of
imprimitivity of $(S, \Sigma)$ on $X$. If $(V,\rho,Q,T)$ is a
dilation system of $(W,\varphi)$ on $Z$, then the restriction
$$(V|_{\rho(\Omega)Z}, \rho|_{\rho(\Omega)Z}, Q|_{\rho(\Omega)Z},
\rho(\Omega)T)$$ is a probability dilation system of $(W,\varphi)$ on
$\rho(\Omega)Z$.
\end{lemma}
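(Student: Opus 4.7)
The plan is to verify each ingredient of the definition of a probability dilation system by leveraging Lemma \ref{le:446} together with two basic identities: $\rho(E)\rho(\Omega)=\rho(E)$ for every $E\in\Sigma$ (since $\rho$ is spectral and $E\subseteq\Omega$), and $V_s\rho(\Omega)=\rho(\Omega)V_s$ for every $s\in S$ (since $s\Omega=\Omega$ by the $S$-space axioms, combined with the imprimitivity relation $V_s\rho(E)=\rho(sE)V_s$).

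First I would check that the objects are well-defined. Lemma \ref{le:446} already gives that $\rho(\Omega)Z$ is invariant under $V$ and that $(V|_{\rho(\Omega)Z},\rho|_{\rho(\Omega)Z})$ is a probability projective isometric spectral system of imprimitivity on $\rho(\Omega)Z$. The identity $\rho(\Omega)\rho(E)=\rho(E)$ shows that $\rho(E)Z\subseteq\rho(\Omega)Z$, so $\rho|_{\rho(\Omega)Z}(E)$ really does land in $\rho(\Omega)Z$, and $Q|_{\rho(\Omega)Z}$, $\rho(\Omega)T$ are bounded operators of the correct types, namely $\rho(\Omega)Z\to X$ and $X\to\rho(\Omega)Z$ respectively.

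Second, I would verify the three dilation identities. For condition (1), for any $x\in X$ and $E\in\Sigma$,
\[
\bigl(Q|_{\rho(\Omega)Z}\bigr)\bigl(\rho|_{\rho(\Omega)Z}(E)\bigr)\bigl(\rho(\Omega)T\bigr)x
=Q\,\rho(E)\rho(\Omega)Tx=Q\,\rho(E)Tx=\varphi(E)x,
\]
using $\rho(E)\rho(\Omega)=\rho(E)$ and the original dilation identity $\varphi(E)=Q\rho(E)T$. Condition (2), that $Q|_{\rho(\Omega)Z}\cdot V|_{\rho(\Omega)Z,s}=W_s\cdot Q|_{\rho(\Omega)Z}$, is immediate since both sides are restrictions to $\rho(\Omega)Z$ of the equality $QV_s=W_sQ$. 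For condition (3), for any $x\in X$,
\[
V_s\,\rho(\Omega)Tx=\rho(s\Omega)V_s Tx=\rho(\Omega)V_sTx=\rho(\Omega)TW_sx,
\]
where the first equality is the intertwining relation, the second uses $s\Omega=\Omega$, and the third uses $V_sT=TW_s$. Since $\rho(\Omega)Tx\in\rho(\Omega)Z$, the left-hand side equals $V|_{\rho(\Omega)Z,s}(\rho(\Omega)T)x$, which is exactly what is required.

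There is no real obstacle here; the lemma amounts to a careful bookkeeping of compositions. The only subtle ingredient is ensuring $V_s$ commutes with $\rho(\Omega)$, which one might overlook but which follows immediately from the group-action axiom $s\Omega=\Omega$ applied to the imprimitivity covariance. Once that observation is in hand, each of the three defining equations of a dilation system reduces to a single-line computation from the hypotheses on $(V,\rho,Q,T)$.
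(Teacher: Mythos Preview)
Your proof is correct and follows essentially the same approach as the paper's own proof: both verify the three dilation identities via the computations $\rho(E)\rho(\Omega)=\rho(E)$, the restriction of $QV_s=W_sQ$, and the chain $V_s\rho(\Omega)T=\rho(s\Omega)V_sT=\rho(\Omega)TW_s$. The only cosmetic difference is that you invoke Lemma~\ref{le:446} for the invariance of $\rho(\Omega)Z$ and the probability spectral property, whereas the paper re-derives the invariance directly; and the paper writes condition~(1) using $\rho(\Omega)\rho(E)\rho(\Omega)$ on both sides rather than your one-sided $\rho(E)\rho(\Omega)=\rho(E)$, but the content is identical.
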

\begin{proof} Denote $(V|_{\rho(\Omega)Z}, \rho|_{\rho(\Omega)Z}, Q|_{\rho(\Omega)Z},
\rho(\Omega)T)$ by $(\widehat{V}, \hat{\rho}, \widehat{Q},
\widehat{T})$ for short, respectively. It is easy to verify that
$\hat{\rho}$ is a probability spectral measure on $\rho(\Omega)Z$.
For all $E\in\Sigma$, we have
\[\varphi(E)=Q\rho(E)T=Q\rho(\Omega\cap E \cap \Omega)T
=Q\rho(\Omega)\rho(E)\rho(\Omega)T=\widehat{Q}\rho(E)\widehat{T}.\]
For each $s\in S$,  we get $V_s\rho(\Omega)Z=\rho(s\Omega)V_s
Z=\rho(\Omega)V_s Z\subseteq\rho(\Omega)Z.$ So $\rho(\Omega)Z$ is a
invariant subspace of $V$. Then for all $s\in S$, we have
$\widehat{Q}\widehat{V}_s=QV_s|_{\rho(\Omega)Z}=W_sQ|_{\rho(\Omega)Z}
=W_s\widehat{Q}$ and
\[\widehat{V}_s\widehat{T}=V_s\rho(\Omega)T=\rho(s\Omega)V_s T
=\rho(\Omega)TW_s=\widehat{T} W_s.\] Thus $(\widehat{V}, \hat{\rho},
\widehat{Q}, \widehat{T})$ is a probability dilation system of
$(W,\varphi)$ on $\rho(\Omega)Z$.
%
%
\end{proof}

A key ingredient  in the dilation theory of operator valued measures developed in \cite{HLLL}  is the introduction of the
elementary dilation space $M_\varphi$ and the minimal dilation norm on the space $M_\varphi$. This is also needed  in the  the proof of Theorem 3.1.

Let $X$ be a Banach spaces and $(\Omega,\Sigma,\varphi,B(X))$ be
an operator-valued measure system. For any $E\in \Sigma$ and $x\in
X,$ define
\[\varphi_{x,E}:\Sigma\to X,\quad \varphi_{x,E}(F)=\varphi(E\cap F)x,
\quad \forall F\in \Sigma.\]
 Then it is easy to see that
$\varphi_{x,E}$ is a vector-valued measure on $(\Omega,\Sigma)$ of
$X$. Let $M_\varphi=\mbox{span}\{\varphi_{x,E}: x\in X, E\in \Sigma\}.$  Define $\|\cdot\|_\alpha: M_\varphi\to\R_+\cup\{0\}$ by
\[\left\|\sum^{N}_{i=1}c_i\varphi_{{x_i},{E_i}}\right\|_\alpha
=\sup_{E \in \Sigma}\left\|\sum_{i=1}^N c_i\varphi(E\cap
E_i)x_i\right\|_X\] for all $\sum^{N}_{i=1}c_i\varphi_{{x_i},{E_i}}\in
M_\varphi$.  Then $||\cdot ||_{\alpha}$ is a dilation norm on $M_{\varphi}$ (see \cite{HLLL}),
which is minimal in the sense that,
for any dilation norm $||\cdot ||_{\beta}$ on $M_{\varphi}$, there exists a
constant $C_\beta$ such that for any $\sum_{i=1}^N c_i \varphi_{{x_i},{E_i}}\in M_{\varphi},$
\[
\sup_{E\in\Sigma}\left\|\sum_{i=1}^N c_i\varphi(E\cap E_i)x_i\right\|_Y \leq C_\beta \left\|\sum_{i=1}^N c_i\varphi_{{x_i},{E_i}}\right\|_\beta,
\]
where $N>0,$ $\{c_i\}_{i=1}^{N}\subset\mathbb{C},$ $\{x_i\}_{i=1}^{N}\subset X$ and $\{E_i\}_{i=1}^{N}\subset \Sigma. $ Consequently we have
that
\[ \|f\|_\alpha\le C_\beta\|f\|_\beta, \qquad \forall f\in M_{\varphi}.\]

\begin{definition}\label{pr:449}
Let $(W,\varphi)$ be a projective isometric operator-valued system of
imprimitivity of $(S, \Sigma)$ on a Banach space $X$ with the
multiplier $\omega$. Assume that $\|\cdot\|_d$ is a norm on
$M_\varphi$ and denote the completion by $\mathcal{M}_\varphi$. Then
$\|\cdot\|_d$ is called a \emph{dilation norm} of $(W,\varphi)$ if:
\begin{enumerate}
\item The map $\rho:\Sigma\to B(\mathcal{M}_\varphi)$ defined by
$\rho(E)(\varphi_{x,F})=\varphi_{x,F\cap E}$ for all $x\in X$ and
$E,F\in\Sigma$ is an operator-valued measure;
\item The maps $T:X\to\mathcal{M}_\varphi$ and $Q:\mathcal{M}_\varphi\to X$ defined by
$T(x)=\varphi_{x,\Omega}$ and $Q(\varphi_{x,E})=\varphi(E)x$ for all
$x\in X$ and $E\in\Omega$ are bounded;
\item For all $s\in S$ the map $V_s$ on $\mathcal{M}_\varphi$ defined by
$V_s(\varphi_{x,E})=\varphi_{W_s x,s E}$ for all $x\in X$ and
$E\in\Sigma$ is an isometry.
\end{enumerate}
\end{definition}
\begin{lemma}\label{le:37} $(V, \rho, Q,T)$ is a probability dilation system of
$(W,\varphi)$.
\end{lemma}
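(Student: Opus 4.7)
The plan is to verify, directly from the construction in Definition 3.4, the six properties required for $(V,\rho,Q,T)$ to be a probability dilation system of $(W,\varphi)$: that $\rho$ is a probability spectral measure on $\mathcal{M}_\varphi$, that $V$ is a projective isometric $\omega$-representation of $S$, that the intertwining $V_s\rho(E)=\rho(sE)V_s$ holds, and that the three dilation identities $\varphi(E)=Q\rho(E)T$, $QV_s=W_sQ$, $V_sT=TW_s$ all hold. Since Definition 3.4 already guarantees that $\rho(E)$, $V_s$, $Q$, $T$ are bounded (and $V_s$ isometric) on the dense subspace $M_\varphi \subset \mathcal{M}_\varphi$, it suffices to check every algebraic identity on the generators $\varphi_{x,F}$ and then extend by linearity and continuity.

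First I would verify that $\rho$ is a probability spectral measure: for $E,F,G\in\Sigma$ and $x\in X$, the computation $\rho(E)\rho(F)\varphi_{x,G}=\rho(E)\varphi_{x,F\cap G}=\varphi_{x,E\cap F\cap G}=\rho(E\cap F)\varphi_{x,G}$ shows $\rho$ is idempotent-valued (hence spectral), while $\rho(\Omega)\varphi_{x,F}=\varphi_{x,F}$ gives $\rho(\Omega)=I_{\mathcal{M}_\varphi}$. Next, using the action axiom $s(tE)=(st)E$ and the projective identity $W_sW_t=\omega(s,t)W_{st}$, one computes $V_sV_t\varphi_{x,E}=V_s\varphi_{W_tx,tE}=\varphi_{W_sW_tx,s(tE)}=\omega(s,t)\varphi_{W_{st}x,(st)E}=\omega(s,t)V_{st}\varphi_{x,E}$. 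Combined with $V_e=I$ (from $W_e=I$ and $eE=E$) and the isometry hypothesis in Definition 3.4(3), this shows $V$ is a projective isometric $\omega$-representation.

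The intertwining $V_s\rho(E)=\rho(sE)V_s$ follows similarly: $V_s\rho(E)\varphi_{x,F}=V_s\varphi_{x,E\cap F}=\varphi_{W_sx,s(E\cap F)}=\varphi_{W_sx,sE\cap sF}=\rho(sE)\varphi_{W_sx,sF}=\rho(sE)V_s\varphi_{x,F}$. The identity $\varphi(E)=Q\rho(E)T$ is immediate: $Q\rho(E)Tx=Q\rho(E)\varphi_{x,\Omega}=Q\varphi_{x,E}=\varphi(E)x$. For $QV_s=W_sQ$ I would invoke the system-of-imprimitivity hypothesis $W_s\varphi(E)=\varphi(sE)W_s$: $QV_s\varphi_{x,E}=Q\varphi_{W_sx,sE}=\varphi(sE)W_sx=W_s\varphi(E)x=W_sQ\varphi_{x,E}$. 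Finally, $V_sTx=V_s\varphi_{x,\Omega}=\varphi_{W_sx,s\Omega}=\varphi_{W_sx,\Omega}=TW_sx$, using $s\Omega=\Omega$ from the semigroup-action axioms.

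The main obstacle is essentially bookkeeping rather than conceptual: one must carefully note that each identity, first established on generators $\varphi_{x,F}\in M_\varphi$, extends by linearity and continuity to the whole completion $\mathcal{M}_\varphi$, which is precisely what the boundedness clauses of Definition 3.4 ensure. Once this extension is in place, the entire verification reduces to the four action axioms, the multiplier identity, and the imprimitivity hypothesis on $(W,\varphi)$.
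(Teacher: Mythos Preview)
Your proposal is correct and follows essentially the same approach as the paper's own proof: both verify the required identities on generators $\varphi_{x,E}$ using the semigroup-action axioms, the multiplier identity, and the imprimitivity relation $W_s\varphi(E)=\varphi(sE)W_s$, in the same order. Your write-up is in fact slightly more thorough, since you explicitly spell out why $\rho$ is spectral and probability, note $V_e=I$, and flag the use of $s\Omega=\Omega$ and the density/continuity extension, all of which the paper leaves implicit.
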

\begin{proof} In the terminology of Definition \ref{pr:449},  it is easy to verify that $\rho$ is a probability
spectral measure of $\Sigma$ on $\mathcal{M}_\varphi$. Since
\[V_sV_t(\varphi_{x,E})=\varphi_{W_sW_t x,st E}=
\varphi_{\omega(s,t)W_{st} x,st E}=\omega(s,t)\varphi_{W_{st} x,st
E}=\omega(s,t)V_{st}(\varphi_{x,E}).\] for all $s,t\in S$, $x\in X$ and $E\in\Sigma$, we obtain
$V_sV_t=\omega(s,t)V_{st}$ for all $s,t\in S$. Thus $V$ is an isometric
$\omega$-representation of $S$ on $\mathcal{M}_\varphi$.

 For all
$s\in S$, $x\in X$ and $E,F\in\Sigma$, we have
\begin{eqnarray*}
  V_s\rho(F)(\varphi_{x,E}) &=& V_s(\varphi_{x,F\cap E})=\varphi_{W_s x, s(F\cap
E)}=\varphi_{W_s x, s F\cap sE} \\
    &=& \rho(sF)\varphi_{W_s x, sE}=
\rho(sF)V_s(\varphi_{x, E}).
\end{eqnarray*}
This implies that $V_s\rho(F)=\rho(sF)V_s$ for all $s\in S$ and
$F\in\Sigma$. Thus $(V,\rho)$ is a probability spectral
$\omega$-system of imprimitivity of $(S,\Sigma)$ on
$\mathcal{M}_\varphi$. From
\[Q\rho(E)T(x)=Q\rho(E)(\varphi_{x,\Omega})=Q(\varphi_{x,E})=\varphi(E)(x), \] we get  $\varphi(E)=Q\rho(E)T$ for all $E\in\Sigma$.

 For all $s\in S$,
$x\in X$ and $E\in\Sigma$, we have
\[QV_s(\varphi_{x,E})=Q(\varphi_{W_s x, sE})=\varphi(sE)W_s(x)=
W_s\varphi(E)x=W_sQ(\varphi_{x,E}).\] Thus $QV_s=W_sQ$ for all
$s\in S$.  Finally, for all $s\in S$ and $x\in X$ we have
\[V_sT(x)=V_s(\varphi_{x,\Omega})=\varphi_{W_s x, \Omega}=T(W_s x).\]
This implies that $V_s T=TW_s$ for all $s\in S$. Thus, $(V, \rho,
Q,T)$ is a probability dilation system of $(S, \Sigma)$ on
$\mathcal{M}_\varphi$.
\end{proof}

\begin{definition} The norm $||\cdot||_{\alpha}$ on the completion $\mathcal{M}_{\varphi}$ is called the {\it  minimal dilation norm}, and its induced probability dilation system $(V, \rho, Q,T)$ is called the {\it minimal dilation system }
$(W,\varphi)$.
\end{definition}

Now we show that every injective dilation system induces a natural dilation
norm. A dilation system $(V,\rho,Q,T)$ of $(W,\varphi)$ is said to be \textit{injective} if
$\sum \rho(E_i)T(x_i)=0$ whenever $\sum \varphi_{x_i, E_i}=0$ for all $x_i\in X$ and 
$E_i\in\Sigma$. This is equivalent to that the natural map 
from $M_\varphi$ to $\mathrm{span}\{\rho_{Tx,E}:x\in X, E\in\Sigma\}$ by 
$\varphi_{x,E}\mapsto \rho_{Tx,E}$ is injective (See Theorem 2.26 in \cite{HLLL}).

\begin{proposition}\label{pr:450}
Let $(W,\varphi)$ be a projective isometric operator-valued system of
imprimitivity of $(S,\Sigma)$ on $X$. If $(V,\rho,Q,T)$ is an injective
dilation system of $(W,\varphi)$ on $Z$, define $\|\cdot\|_d$ on
$M_\varphi$ by
\[\|\mu\|_d=\Big\|\sum_i\rho(E_i)T(x_i)\Big\|_Z\]
for all $\mu=\sum_i\varphi_{x_i,E_i}\in M_\varphi.$ Then
$\|\cdot\|_d$ is a dilation norm of $(W,\varphi)$. Assume that
$(V_d,\rho_d, Q_d,T_d)$ is the corresponding probability dilation
system of $(W,\varphi)$ on $\mathcal{M}_\varphi$. Thus, the natural
map $R$ from $\mathcal{M}_\varphi$ to $Z$ defined by
$$R(\mu)=\sum_i\rho(E_i)T(x_i)$$ for all $\mu=\sum_i\varphi_{x_i,E_i}\in
M_\varphi$ is a linear isometry and satisfies that for all $s\in S$
and $E\in\Sigma$ we have
\begin{enumerate}
  \item $R (V_d)_s=V_sR$;
  \item $R\rho_d(E)=\rho(E)R$;
  \item $Q_d=QR$;
  \item $RT_d=\rho(\Omega)T$.
\end{enumerate}
\end{proposition}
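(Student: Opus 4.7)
The plan is to construct $R$ directly from the defining formula and exploit the tautological identity $\|R(\mu)\|_Z=\|\mu\|_d$ to transport the dilation structure on $Z$ back to $\mathcal{M}_\varphi$. Once $\|\cdot\|_d$ is known to be a norm on $M_\varphi$, the map $R$ is automatically an isometry on $M_\varphi$ and extends uniquely to an isometric embedding $R:\mathcal{M}_\varphi\to Z$; the three conditions of Definition \ref{pr:449} and the four intertwining identities will then reduce to direct computations on the generators $\varphi_{x,E}$.

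First I would verify that $\|\cdot\|_d$ is well-defined and is a norm on $M_\varphi$. Well-definedness is exactly the injectivity hypothesis: $\sum_i\varphi_{x_i,E_i}=0$ forces $\sum_i\rho(E_i)T(x_i)=0$. For positive-definiteness, suppose $\|\mu\|_d=0$, so $\sum_i\rho(E_i)T(x_i)=0$ in $Z$. Since $\rho$ is spectral, applying $\rho(F)$ for an arbitrary $F\in\Sigma$ yields $\sum_i\rho(F\cap E_i)T(x_i)=0$; applying $Q$ and invoking the dilation identity $\varphi(F\cap E_i)=Q\rho(F\cap E_i)T$ gives $\mu(F)=\sum_i\varphi(F\cap E_i)x_i=0$ for every $F$, so $\mu=0$ as a vector-valued measure. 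Homogeneity and the triangle inequality are clear.

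Next I would verify the three conditions of Definition \ref{pr:449}, invoking the algebraic relations among $V$, $\rho$, $Q$, $T$ at each step. For $\rho_d$, a direct computation on generators gives $R\rho_d(E)\mu=\rho(E)R\mu$, from which boundedness and strong countable additivity of $\rho_d$ transfer from $\rho$ through the isometric embedding $R$. The bounds $\|T_d(x)\|_d=\|\rho(\Omega)Tx\|_Z\leq\|\rho(\Omega)\|\cdot\|T\|\cdot\|x\|$ and $\|Q_d(\varphi_{x,E})\|_X=\|Q\rho(E)Tx\|_X\leq\|Q\|\cdot\|\varphi_{x,E}\|_d$ handle $T_d$ and $Q_d$. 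For $(V_d)_s$, using $V_sT=TW_s$ together with $V_s\rho(E)=\rho(sE)V_s$ one rewrites $\sum_i\rho(sE_i)T(W_sx_i)=V_s\sum_i\rho(E_i)Tx_i$, whence $\|(V_d)_s\mu\|_d=\|V_sR\mu\|_Z=\|R\mu\|_Z=\|\mu\|_d$ since $V_s$ is an isometry. With $\|\cdot\|_d$ thereby confirmed as a dilation norm, Lemma \ref{le:37} produces the probability dilation system $(V_d,\rho_d,Q_d,T_d)$.

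Finally, the four intertwining identities fall out of these same calculations: (1) is the rewriting already used for $(V_d)_s$; (2) is the relation $R\rho_d(E)\mu=\rho(E)R\mu$; (3) is $QR(\varphi_{x,E})=Q\rho(E)Tx=\varphi(E)x=Q_d(\varphi_{x,E})$; and (4) is $RT_d(x)=R(\varphi_{x,\Omega})=\rho(\Omega)Tx$. The only real technical point, rather than a deep obstacle, is the positive-definiteness of $\|\cdot\|_d$: it requires the dilation identity to convert the single scalar statement $\sum_i\rho(E_i)Tx_i=0$ in $Z$ into the measure-theoretic conclusion that $\mu$ vanishes on every $F\in\Sigma$. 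Once that is settled, the remainder of the argument is essentially a transfer of structure through an isometric embedding.
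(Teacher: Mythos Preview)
Your proposal is correct and follows essentially the same route as the paper: establish that $\|\cdot\|_d$ satisfies Definition~\ref{pr:449}, invoke Lemma~\ref{le:37}, and then read off the four intertwining relations from computations on generators. The only difference is cosmetic: the paper outsources the verification that $\|\cdot\|_d$ is a norm and that $\rho_d,Q_d,T_d$ are well-defined and bounded to Theorem~2.26 of \cite{HLLL}, whereas you carry out these checks directly (your positive-definiteness argument via $Q\rho(F)\big(\sum_i\rho(E_i)Tx_i\big)=\mu(F)$ is exactly what underlies that cited result).
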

\begin{proof} By  Theorem 2.26 in \cite{HLLL},
we know that $\|\cdot\|_d$ is an norm, $Q_d$ and $T_d$ both are
well-defined, linear and bounded, and that $\rho_d$ is a probability
spectral measure. So we only need to prove that for all $s\in S$,
$(V_d)_s$ is an isometry. Let $\mu=\sum_i\varphi_{x_i,E_i}\in
M_\varphi$. Then we have
%
\begin{eqnarray*}
\|(V_d)_s(\mu)\|_d&=&\Big\|(V_d)_s\Big(\sum_i\varphi_{x_i,E_i}\Big)\Big\|_d
=\Big\|\sum_i \varphi_{W_sx_i,s E_i}\Big\|_d\\
&=&\Big\|\sum_i \rho(s E_i)T W_s(x_i)\Big\|
=\Big\|\sum_i \rho(s E_i)V_sT(x_i)\Big\|\nonumber\\
&=&\Big\|\sum_i V_s\rho(E_i)T(x_i)\Big\|
=\Big\|V_s\Big(\sum_i \rho(E_i)T(x_i)\Big)\Big\|\\
&=&\Big\|\sum_i \rho(E_i)T(x_i)\Big\| =\Big\|\sum_i
\varphi_{x_i,E_i}\Big\|_d\\&=&\|\mu\|_d.
\end{eqnarray*}
Thus, $\|\cdot\|_d$ is a dilation norm of $(W,\varphi)$. It
is clear by definition that $R$ is an isometric embedding from
$\mathcal{M}_\varphi$ into $Z$. For all $s\in S$, $x\in X$ and
$F\in\Sigma$, we have
\begin{eqnarray*}
  R(V_d)_s(\varphi_{x,F}) &=& R(\varphi_{W_sx,sF})=\rho(sF)TW_s(x)=\rho(sF)V_sT(x) \\
    &=& V_s\rho(F)T(x)=V_sR(\varphi_{x,F}).
\end{eqnarray*}
Thus  $R(V_d)_s=V_sR$ for all $s\in S$. If $x\in X$ and
$E,F\in\Sigma$, then  we have
\begin{eqnarray*}
  R\rho_d(E)(\varphi_{x,F}) &=& R(\varphi_{x,E\cap F})=\rho(E\cap F)T(x) \\
    &=& \rho(E)\rho(F)T(x)
=\rho(E)R(\varphi_{x,F}),
\end{eqnarray*}
and thus $R\rho_d(E)=\rho(E)R$ for all $E\in\Sigma$. Finally, if
$x\in X$ and $F\in\Sigma$, then we get
\[RT_d(x)=R(\varphi_{x,\Omega})=\rho(\Omega)T(x),\]
\[Q_d(\varphi_{x,F})=\varphi{F}(x)=Q\rho(F)T(x)
=QR(\varphi_{x,F}).\] Therefore we get that $RT_d=\rho(\Omega)T$ and
$Q_d=QR$.
\end{proof}

%

Now we are ready to complete the proof of Theorem \ref{main-thm1}.

\bigskip

\noindent{\bf Proof of Theorem \ref{main-thm1}:} Let $(W,\varphi)$ be a projective isometric operator-valued system of imprimitivity
of $(S, \Sigma)$ on a Banach space $X$, and  $(\rho,Q,T)$ be the minimal dilation
system of $\varphi$ on $\mathcal{M}_\varphi$. For any $s\in S$, define
$V_s$ on $\mathcal{M}_\varphi$ by
$V_s(\varphi_{x,E})=\varphi_{W_s x, s E}$
for all $x\in X$ and $E\in\Sigma$. Then for all
$\mu=\sum_j\varphi_{x_j,E_j}\in M_\varphi$
\begin{eqnarray}\label{eq:m1}
\|V_s(\mu)\|&=&
  \left\|V_s(\sum \varphi_{x_j,E_j})\right\| =
  \left\|\sum\varphi_{W_s x_j, s E_j}\right\| \nonumber\\
   &=& \sup_{F\in\Sigma}\left\|\sum\varphi(sE_j\cap F)W_s x_j\right\|
   = \sup_{F\in\Sigma}\left\|\sum W_s\varphi(E_j\cap s^{-1}F) x_j\right\| \nonumber\\
   &=& \sup_{F\in\Sigma}\left\|W_s\left(\sum\varphi(E_j\cap F) x_j\right)\right\|
   \leq \sup_{F\in\Sigma}\left\|\sum\varphi(E_j\cap F) x_j\right\| \nonumber \\
   &=& \left\|\sum \varphi_{x_j,E_j}\right\|
   =\|\mu\|.
\end{eqnarray}
For all $s,t\in S$, $x\in X$ and $E\in\Sigma$, we have
\[  V_s V_t(\varphi_{x,E}) =
  V_s(\varphi_{W_t x, t E})=\varphi_{W_s W_t x, s(tE)}
   = \varphi_{\omega(s,t)W_{st} x, (st) E}=
   \omega(s,t)V_{st}(\varphi_{x,E}).
\]
This implies  that $V_s V_t=\omega(s,t)V_{st}$ for each $s,t\in S$.
Thus, $V$ is an isometric $\omega$-representation of $S$ on
$\mathcal{M}_\varphi$. Furthermore,  since for all $s\in S$, $x\in X$ and
$E,F\in\Sigma$ we have
\begin{eqnarray*}
  V_s\rho(E)(\varphi_{x,F}) &=& V_s(\varphi_{x,E\cap F})
=\varphi_{W_s x, s(E\cap F)}=\varphi_{W_s x, sE\cap sF} \\
   &=& \rho(E)(\varphi_{W_sx,sF})=\rho(sE)V_s(\varphi_{x,F}),
\end{eqnarray*}
we obtain that $V_s\rho(E)=\rho(sE)V_s$ for all $s\in S$ and
$E\in\Sigma$. Therefore $\|\cdot\|_{\mathcal{M}_\varphi}$ is a dilation
norm of $(W,\varphi)$, and so,  by Lemma \ref{le:37}, $(V,\rho,Q,T)$ is a
probability dilation system of $(W,\varphi)$.\qed
\bigskip

While the dilated probability projective spectral system of imprimitivity in general is based on a Banach space, it is natural to ask whether the dilated space can also be taken as a Hilbert space if the considered isometric  operator-valued system of
imprimitivity is based on a Hilbert space.  Since there exists an example of a Hilbert space based operator-valued measure which has a Banach space dilation but not a Hilbert space dilation (See Theorem E and the subsequent discussion in the introduction to \cite{HLLL}),  it seems that this Banach space restriction is necessary. However, as with Naimark's dilation theorem (c.f. \cite{Aev, HA, Pa}) , if the condition of positivity is imposed on the isometric operator-valued system a Hilbert space dilation is possible.

\begin{theorem}\label{th:H67}
 Let $\mathcal {G}$ be a locally compact Hausdorff topological group
acting on a measurable space $(\Omega,\Sigma)$.
Let $(U, \varpi)$ be an isometric positive operator-valued system of
imprimitivity of $(\mathcal {G}, \Sigma)$ on a Hilbert
space $\cH$. Then there is an isometric (orthogonal) projection-valued system of
imprimitivity $(\widetilde{U},\pi)$ of $(\mathcal {G}, \Sigma)$ on a
Hilbert space $\cK$ and a bounded linear operator
$V:\cH\rightarrow \cK$ such that
\[ \widetilde{U}_g V=V\, U_g \ \ \mbox{ and } \ \ \varpi(E)=V^* \pi(E) V  \]
for all $g\in \mathcal {G}$ and\, $E\in \Sigma$.
\end{theorem}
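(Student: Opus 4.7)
My plan is to combine Naimark's dilation theorem, applied to the positive operator-valued measure $\varpi$, with a lifting of the unitary representation $U$ to the Naimark space that is forced by the covariance relation $U_g\varpi(E)U_{g^{-1}}=\varpi(g\cdot E)$. This is the Hilbert space analogue of the construction used in Theorem \ref{main-thm1}, but the positivity of $\varpi$ is what allows us to replace the minimal dilation norm $\|\cdot\|_\alpha$ by a genuine inner product.

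First, I would construct the standard Naimark dilation of $\varpi$. Let $\cK_0$ be the algebraic span of symbols $\chi_E\otimes \xi$ with $E\in\Sigma$ and $\xi\in\cH$, and define a sesquilinear form by
\[
\langle \chi_E\otimes\xi,\,\chi_F\otimes\eta\rangle_0=\langle \varpi(E\cap F)\xi,\eta\rangle_\cH.
\]
Positivity of $\varpi$ makes this form positive semi-definite; quotienting by its null space and completing gives a Hilbert space $\cK$. Set $V\xi=[\chi_\Omega\otimes\xi]$ and $\pi(E)[\chi_F\otimes\xi]=[\chi_{E\cap F}\otimes\xi]$. A direct check shows $\pi$ is a projection-valued measure (hence a spectral measure by the remark after Definition 2.1, noting that $\pi(E)$ is a self-adjoint idempotent here because we are in a Hilbert space) and $\varpi(E)=V^*\pi(E)V$.

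Next, I would lift $U$ by defining, on generators,
\[
\widetilde U_g[\chi_E\otimes\xi]=[\chi_{g\cdot E}\otimes U_g\xi].
\]
The key computation is that for any two generators,
\[
\langle \widetilde U_g(\chi_E\otimes\xi),\widetilde U_g(\chi_F\otimes\eta)\rangle_0=\langle \varpi(g\cdot(E\cap F))U_g\xi,U_g\eta\rangle_\cH=\langle U_g^{*}\varpi(g\cdot(E\cap F))U_g\xi,\eta\rangle_\cH,
\]
which, by the imprimitivity relation and unitarity of $U_g$, collapses to $\langle \varpi(E\cap F)\xi,\eta\rangle_\cH$. Hence $\widetilde U_g$ descends to a well-defined isometry on $\cK$, and $\widetilde U_{g^{-1}}$ provides its inverse, so $\widetilde U_g$ is unitary. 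The multiplicative property $\widetilde U_g\widetilde U_h=\widetilde U_{gh}$ follows from $(gh)\cdot E=g\cdot(h\cdot E)$ and $U_gU_h=U_{gh}$.

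Finally, I would verify the two relations asserted in the theorem. The intertwining $\widetilde U_gV=VU_g$ is immediate from the definitions since $g\cdot\Omega=\Omega$. For the covariance of $\pi$, a short calculation gives $\widetilde U_g\pi(E)\widetilde U_{g^{-1}}[\chi_F\otimes\xi]=[\chi_{g\cdot E\cap F}\otimes \xi]=\pi(g\cdot E)[\chi_F\otimes\xi]$, so $(\widetilde U,\pi)$ is a projection-valued system of imprimitivity. The main obstacle is the well-definedness/isometry step for $\widetilde U_g$, and this is precisely the point where positivity of $\varpi$ together with the imprimitivity identity is needed; every other step is routine bookkeeping on the Naimark space. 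Strong continuity of $\widetilde U$, if desired, can be read off from strong continuity of $U$ and strong countable additivity of $\varpi$ by checking it on the dense set of generators.
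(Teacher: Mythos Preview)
Your proposal is correct and follows essentially the same approach as the paper: the paper builds $\cK$ as the completion of $\mathrm{span}\{\varpi_{x,E}\}$ under the sesquilinear form $\langle \varpi_{x,E},\varpi_{y,F}\rangle=\langle\varpi(E\cap F)x,y\rangle_\cH$, sets $V x=\varpi_{x,\Omega}$, $\pi(E)\varpi_{x,F}=\varpi_{x,E\cap F}$, and $\widetilde U_g\varpi_{x,E}=\varpi_{U_g x,\,g\cdot E}$, which is exactly your Naimark construction with $\varpi_{x,E}$ playing the role of your symbol $[\chi_E\otimes x]$. The only cosmetic difference is that the paper works directly with the vector measures $\varpi_{x,E}$ (and checks the form is already positive definite there), whereas you work with formal symbols and quotient by the null space; the resulting Hilbert spaces and operators are canonically identified.
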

\begin{proof}
Let $\cM$ be the linear space of all vector measures from $\Sigma$
to $\cH$. Define
$$\cM_\varpi=\mathrm{span}\{\varpi_{x,E}:x\in\cH,E\in\Sigma\}$$
which is a linear subspace of $\cM$ induced by $\varpi$. Now we
define a sesquilinear functional $\langle \,, \rangle$ on
$\cM_\varpi$ by
\begin{equation}
\langle M_1,M_2\rangle=\sum_{i=1}^n\sum_{j=1}^m\alpha_i\bar{\beta}_j
\langle\varpi(E_i\cap F_j)\,x_i, y_j\rangle_\cH
\end{equation}
for each $M_1=\sum_{i=1}^n \alpha_i \varpi_{x_i,E_i}$ and
$M_2=\sum_{j=1}^m \beta_j \varpi_{y_j,F_j}$ in $\cM_\varpi$. Since
\[ \sum_{i=1}^n\sum_{j=1}^m\alpha_i\bar{\beta}_j
\langle\varpi(E_i\cap F_j)\,x_i, y_j\rangle_\cH=
\sum_{j=1}^m\bar{\beta}_j \langle M_1(F_j), y_j\rangle_\cH=
\sum_{i=1}^n\alpha_i \langle\,x_i, M_2(E_i)\rangle_\cH,\] we get that the
sesquilinear functional $\langle \,, \rangle$ is well-defined. For
any $M=\sum_{i=1}^n \alpha_i \varpi_{x_i,E_i}$ (without losing the
generality, we can assume that $E_i$'s are disjoint from each
other), we have that
\begin{equation*}
\langle M,M \rangle=\sum_{i=1}^n\sum_{j=1}^n\alpha_i\bar{\alpha}_j
\langle \varphi(E_i\cap E_j)x_i,x_j\rangle_\cH=\sum_{i=1}^n
|\alpha_i|^2 \langle \varphi(E_i)x_i,x_i\rangle_\cH\ge 0.
\end{equation*}
Thus  $\langle M,M \rangle=0$ if and only if $|\alpha_i|^2 \langle
\varphi(E_i)x_i,x_i\rangle_\cH=0$ for each $1\le i\le n$. For any
$E\in\Sigma$ and $1\le i\le n$,
\begin{eqnarray*}
\|\alpha_i\varphi^{1/2}(E\cap E_i)x_i\|^2&=&|\alpha_i|^2
\langle \varphi^{1/2}(E\cap E_i)x_i,\varphi^{1/2}(E\cap E_i)x_i\rangle_\cH\\
&=&|\alpha_i|^2 \langle \varphi(E\cap E_i)x_i,x_i\rangle_\cH\\&\le&
|\alpha_i|^2 \langle \varphi(E_i)x_i,x_i\rangle_\cH=0.
\end{eqnarray*}
Thus, for all $E\in\Sigma$, we have
$$M(E)=\sum_{i=1}^n \alpha_i\varpi(E\cap E_i)x_i
=\sum_{i=1}^n \varpi^{1/2}(E\cap E_i)(\alpha_i\varphi^{1/2}(E\cap
E_i)x_i)=0,$$ which implies that $M=0.$ Thus
$\langle\,,\rangle$ is positive definite and hence  an inner product
on $\cM_\varpi$. Let $\cK$ be the completion of the inner product
space $\cM_\varpi$.

We define a linear map $V:\cH\to \cK$ by
\begin{equation}
    V(x)=\varpi_{x,\Omega} \quad \mbox{ for all } \, x\in\cH.
\end{equation}
Then
\begin{eqnarray*}
\|V(x)\|^2&=&\langle V(x),V(x)\rangle=
\langle\varpi_{x,\Omega},\varpi_{x,\Omega} \rangle =\langle
\varpi(\Omega)x,x\rangle\\&=&\langle\varpi^{1/2}(\Omega)x,
\varpi^{1/2}(\Omega)x\rangle=\|\varpi^{1/2}(\Omega)x\|^2
\end{eqnarray*}
for all $x\in\cH$. So $V$ is bounded with
$$\|V\|=\|\varpi^{1/2}(\Omega)\| =\|\varpi(\Omega)\|^{1/2}.$$

For any $g\in\cG$, define a linear map
$\widetilde{U}_g:\cM_\varpi\to\cM_\varpi$ by
\begin{equation}
    \widetilde{U}_g(M)(E)=U_g M(g^{-1}E)
\end{equation}
for all $M\in\cM_\varpi$ and $E\in\Sigma$. Let
$M=\sum_{i=1}^n\alpha_i\varpi_{x_i,E_i}$ in $\cM_\varpi$. Then for
all $E\in\Sigma$, we get
\begin{eqnarray*}
  \widetilde{U}_g(M)(E) &=& U_g M(g^{-1}E)\\
   &=& \sum_{i=1}^n\alpha_i U_g \varpi(g^{-1}E \cap E_i)x_i\\
   &=& \sum_{i=1}^n\alpha_i \varpi(E \cap g\cdot E_i)U_g x_i
\end{eqnarray*}
That is,
\begin{equation}
\widetilde{U}_g(M)=\sum_{i=1}^n\alpha_i\varpi_{U_g(x_i),\,g\cdot
E_i} \in\cM_\varpi.
\end{equation}
Thus, it is clear that $\widetilde{U}_g$ is well-defined and
surjective. For any $M_1,M_2\in \cM_\varpi$ with
$M_2=\sum_{j=1}^m\beta_j\varpi_{y_j,F_j}$, we obtain that
\begin{eqnarray*}
  \langle \widetilde{U}_g(M_1), \widetilde{U}_g(M_2)\rangle &=&
  \Big\langle \widetilde{U}_g(M_1), \sum_{j=1}^m\beta_j\varpi_{U_g(y_j),\,g\cdot F_j}\Big\rangle \\
   &=& \sum_{j=1}^m\bar{\beta}_j\langle\widetilde{U}_g(M_1)(g\cdot F_j), U_g y_j\rangle \\
   &=& \sum_{j=1}^m\bar{\beta}_j\langle U_g M_1(F_j), U_g y_j\rangle \\
   &=& \sum_{j=1}^m\bar{\beta}_j\langle M_1(F_j), y_j\rangle\\
   &=& \Big\langle M_1,\sum_{j=1}^m\beta_j\varpi_{y_j,F_j}\Big\rangle \\
   &=& \langle M_1,M_2\rangle.
\end{eqnarray*}
So $\widetilde{U}_g$ can be uniquely extended to be a unitary
operator on $\cK$ for each $g\in\cG$. For any $g_1,g_2\in\cG$, $M\in
\cM_\varpi$ and $E\in\Sigma$, we have
\begin{eqnarray*}
  \widetilde{U}_{g_1}\widetilde{U}_{g_2}(M)(E) &=&
  U_{g_1}\widetilde{U}_{g_2}(M)(g_1^{-1}\cdot E)
   = U_{g_1}U_{g_2}M(g_2^{-1}g_1^{-1}\cdot E) \\
   &=& U_{g_1g_2}M((g_1g_2)^{-1}\cdot E)
   = \widetilde{U}_{g_1g_2}(M)(E).
\end{eqnarray*}
This implies that
$\widetilde{U}_{g_1}\widetilde{U}_{g_2}=\widetilde{U}_{g_1g_2}$ and
the map $\widetilde{U}:\mathcal {G}\to B(\cK)$ is a unitary
representation of $\cG$ on a Hilbert space $\cK$.

Now we define a representation $\widetilde{U}:\mathcal {G}\to
B(\cK)$ by
$$\widetilde{U}(g)\left(\sum_{i=1}^n\varpi_{{x_i},{E_i}}\right)=\sum_{i=1}^n
  \varpi_{U_g{x_i},g\cdot E_i}.$$
Since $U$ is a strongly-continuous unitary representation, it is
easy to see that $\widetilde{U}$ is also a strongly-continuous
unitary representation. Now we show that
\[ \widetilde{U}_g V=V U_g, \quad\mbox{and} \quad \widetilde{U}_g \pi(E)\widetilde{U}_{g^{-1}}
=\pi(g\cdot E).\] Let $g\in G .$ The we have
$g\cdot\Omega=g\cdot\Omega\cap\Omega=g\cdot(\Omega\cap
g^{-1}\cdot\Omega)=gg^{-1}\Omega=\Omega.$

Since, for any $x\in\cH,$
\[\widetilde{U}_g V(x)=\widetilde{U}_g(\varpi_{x,\Omega})
=\varpi_{U_g(x),g\cdot \Omega}=\varpi_{U_g(x),\Omega}=V U_g(x),\]
we obtain that $\widetilde{U}_g V=V U_g.$

A bounded linear operator $V:\cH\to \cK$, and an orthogonal
projection-valued measure $\pi:\Sigma\to B(\cK)$ such that
$$\varpi(E)=V^* \pi(E) V.$$
The set $M_\varpi=\mbox{span}\{\varpi_{x,E}: x\in \cH, E\in
\Sigma\}$ is dense in $\cK$.
 For every $E\in\Sigma$, the linear map
 $\pi(E):\widetilde{M}_\varpi\to\widetilde{M}_\varpi$ is
 $$\pi(E)\left(\sum_{i=1}^n\varpi_{{x_i},{E_i}}\right)=\sum_{i=1}^n
  \varpi_{{x_i},{E\cap E_i}}.$$

Since $\widetilde{U}$ is an unitary representation, we only need to
prove $\widetilde{U}_g\pi(E)=\pi(g\cdot E)\widetilde{U}_g.$ For any
$\sum_{i=1}^n\varpi_{{x_i},{E_i}}\in\cK,$
\[\widetilde{U}_g\pi(E)\left(\sum_{i=1}^n\varpi_{{x_i},{E_i}}\right)
=\widetilde{U}_g\left(\sum_{i=1}^n\varpi_{{x_i},{E_i\cap
E}}\right)=\sum_{i=1}^n\varpi_{U_g(x_i),{g\cdot(E_i\cap E)}},\]
\[\pi(g\cdot E)\widetilde{U}_g\left(\sum_{i=1}^n\varpi_{{x_i},{E_i}}\right)
=\pi(g\cdot E)\left(\sum_{i=1}^n\varpi_{U_g(x_i),{g\cdot
E_i}}\right)=\sum_{i=1}^n\varpi_{U_g(x_i),{g\cdot (E_i\cap E)}}.\]
Hence $\widetilde{U}_g \pi(E)\widetilde{U}_{g^{-1}} =\pi(g\cdot E).$
\end{proof}


\section{Dilation of isometric representation induced framings}

In this section we investigate the dilation property for framings that are induced by a projective isometric representation on a discrete group and based on a Banach space.

Recall that if $\{x_j\}_{j\in\J}$ is a frame for a Hilbert space $H$
with the frame transform $S=V^*V$, then we get a reconstruction
operator $S^{-1}$ satisfying
\[x=\sum_{j\in\J}\langle x, S^{-1}x_j \rangle x_j\]
and this series converges unconditionally for all $x\in H$. In
\cite{CHL}, Casazza, Han and Larson introduce a natural definition for a
framing in a Banach space from this representation.
\begin{definition} (i) A \emph{framing} for a Banach space $X$ is a pair of sequences
$\{x_j,f_j\}_{j\in\J}$ with $x_j \in X$ and $f_j \in X^*$ for all
$j\in\J$ such that
\[x=\sum_{j\in\J}\langle x, f_j \rangle x_j \quad \mbox{ for all } x\in X\]
and this series converges unconditionally for all $x\in H$.

(ii) Let $X$ be a Banach space and $G$ a discrete countable group
with unit $ u $. Assume that $\theta$ is a projective
isometric representation of $G$ on $X$ with a multiplier $m$. A $\theta$-induced framing for $X$ is framing of the form  $\{\theta_g
x_j,\theta_{g^{-1}}^* x^*_j\}_{g\in G, j\in\J}$, where
and
that there exist $\{x_j\}_{j\in\J}\subset X$ and
$\{x^*_j\}_{j\in\J}\subset X^*$. When $\J$ is a singleton we say that this is a single-window framing generated by $\theta$, otherwise it is called a multi-window framing generated by $\theta$.
\end{definition}

A  $\theta$-induced framing naturally induces an operator-valued system of imprimitivity: Let $\Sigma$ be the $\sigma$-algebra consiting of all the subsets of $G$, and define
$$
\varphi(E) = \sum_{j\in J, g\in E}\theta_g
x_j\otimes\theta_{g^{-1}}^* x^*_j.
$$
Then $(\varphi, \theta)$ is a projective isometric operator-valued system of imprimitivity, and consequently by our main dilation theorem (Theorem 3.2) it can be
dilated to a probability projective isometrc spectral system of imprimitivity. However, the framing nature of the dilated system becomes somewhat lost in this dilation. Our main result (Theorem 4.2) of this section shows that  we can dilate $(\varphi, \theta)$ to  a  probability projective isometrc spectral system of imprimitivity that is also induced by  a framing on the dilated Banach space. We accomplish this by directly dilating a $\theta$-induced framing to a projective representation induced unconditional basis. The proof uses techniques from both the proof of our main dilation theorem and
the proof of the dilation theory of group representation induced dual frame pairs in Hilbert spaces (cf. \cite{Han2}). It is a  dilation theorem for (finite or infinite) multi-window framings generated by projective isometric representations of discrete groups on Banach spaces.

\begin{theorem}\label{th:410} Let $X$ be a Banach space and $G$ a discrete countable group
with unit $u$. Assume that $\theta$ is a projective
isometric representation of $G$ on $X$ with a multiplier $m$ and
that there exist $\{x_j\}_{j\in\J}\subset X$ and
$\{x^*_j\}_{j\in\J}\subset X^*$ such that $\{\theta_g
x_j,\theta_{g^{-1}}^* x^*_j\}_{g\in G, j\in\J}$ is a framing for
$X$. Then there exists a Banach space $Z$ with an unconditional
basis $\{e_{g,j},e_{g,j}^*\}_{g\in G,j\in\J}$ such that the map
$T:X\to Z$ defined by
\[T(x)=\sum_{g\in G}\sum_{j\in\J}\langle x, \theta_{g^{-1}}^* x^*_j \rangle e_{g,j}\]
is an into isomorphism,  and the map $S:Z\to X$ given by
\[S\Big(\sum_{g\in G}\sum_{j\in\J} a_{g,j} e_{g,j}\Big)=\sum_{g\in G}
\sum_{j\in\J} a_{g,j} \theta_g x_j\] is contractive and surjective,
the map $\lambda:G\to B(Z)$ defined by
\[\lambda_h\Big(\sum_{g\in G} \sum_{j\in\J} a_{g,j} e_{g,j}\Big)=
\sum_{g\in G} \sum_{j\in\J} m(h,g) a_{g,j} e_{hg,j}\] is a
projective isometric representation of $G$ on $Z$ with the same
multiplier $m$ and
\[\|\lambda_h\|\le\|\theta_h\| \quad \mbox{ for all } h\in G. \]
Furthermore, in this case, we have
\begin{enumerate}
  \item[(i)] $S T=I_X$ and $T^*(e_{g,j}^*)=\theta_{g^{-1}}^* x^*_j$ for all $g\in G$ and $j\in\J$;
  \item[(ii)] For each $h\in G$, we have
  \[\lambda_h^*\Big(\sum_{g\in G} \sum_{j\in\J} b_{g,j} e_{g,j}^*\Big)=
  \sum_{g\in G} \sum_{j\in\J} m(h^{-1},g) b_{g,j} e_{h^{-1}g,j}^*.\]
  Thus, $e_{g,j}=\lambda_g e_{ u ,j}$ and $e_{g,j}^*=\lambda_{g^{-1}}^*
  e_{ u ,j}^*$ for each $g\in G$ and $j\in\J$;
  \item[(iii)] For all $g\in G$, we have
  \[\theta_g S=S\lambda_g \ \ \mbox{ and } \ \ \lambda_g T=T \theta_g.\]
\end{enumerate}
\end{theorem}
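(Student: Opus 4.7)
The plan is to construct $Z$ concretely as the completion of the finitely supported formal sums $\sum_{g,j} a_{g,j}\,e_{g,j}$ under a dilation-type norm modeled on the minimal dilation norm $\|\cdot\|_\alpha$ of Section 3, namely
\[
\Big\|\sum_{g,j} a_{g,j}\, e_{g,j}\Big\|_Z
:=\sup_{E}\Big\|\sum_{(g,j)\in E} a_{g,j}\,\theta_g x_j\Big\|_X,
\]
where the supremum is taken over all finite $E\subseteq G\times\J$. Because the framing reconstruction series converges unconditionally on all of $X$, the uniform boundedness principle supplies a constant $C$ with $\sup_E\|P_E\|\le C$, where $P_E(x)=\sum_{(g,j)\in E}\langle x,\theta_{g^{-1}}^*x_j^*\rangle\,\theta_g x_j$. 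This bound will drive the convergence/boundedness arguments below. First I would check that $\|\cdot\|_Z$ is a bona fide norm (nondegeneracy uses the framing: if the sup is zero then $S$ kills every partial sum, and pairing with $\theta_{h^{-1}}^*x_k^*$ recovers $a_{h,k}=0$), and that the coordinate vectors $e_{g,j}$ form an unconditional basis with biorthogonal functionals $e_{g,j}^*$, the latter being automatic since sign-changes $a_{g,j}\mapsto\varepsilon_{g,j}a_{g,j}$ leave the defining supremum invariant.

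With $Z$ in hand, the maps $S$ and $T$ are defined by the stated formulas. Contractivity of $S$ is immediate from the definition of $\|\cdot\|_Z$, while $\|T(x)\|_Z=\sup_E\|P_E(x)\|_X\le C\|x\|_X$ shows $T$ is bounded. The identity $ST=I_X$ is nothing but the framing reconstruction, and it forces $\|T(x)\|_Z\ge\|x\|_X$ so that $T$ is an isomorphism into. For the projective representation $\lambda$, I would define it on basis vectors by $\lambda_h e_{g,j}:=m(h,g)\,e_{hg,j}$ and extend linearly. The central calculation, using the cocycle identity $m(h,h^{-1}g')\,\theta_{g'}=\theta_h\theta_{h^{-1}g'}$, is
\[
\sum_{(g',j)\in E} m(h,g)\,a_{g,j}\,\theta_{g'}x_j
=\theta_h\Big(\sum_{(g,j)\in h^{-1}E} a_{g,j}\,\theta_g x_j\Big),
\]
with $g=h^{-1}g'$. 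Taking $X$-norms and observing that the sup over $E$ transforms into the sup over $h^{-1}E$ yields $\|\lambda_h\|\le\|\theta_h\|=1$; applying the same estimate with $h^{-1}$ in place of $h$ gives equality, so $\lambda_h$ is isometric. The relation $\lambda_{h_1}\lambda_{h_2}=m(h_1,h_2)\,\lambda_{h_1h_2}$ then reduces on each $e_{g,j}$ to the multiplier cocycle identity $m(h_2,g)\,m(h_1,h_2g)=m(h_1,h_2)\,m(h_1h_2,g)$.

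For items (i)--(iii), the adjoint formula $T^*(e_{g,j}^*)=\theta_{g^{-1}}^*x_j^*$ follows by evaluating $\langle x,T^*(e_{g,j}^*)\rangle=\langle T(x),e_{g,j}^*\rangle$ against the definition of $T$. The formula for $\lambda_h^*$ comes from the biorthogonality computation $\langle\lambda_h e_{g,j},e_{g',j'}^*\rangle=m(h,g)\,\delta_{hg,g'}\delta_{j,j'}$ and a normalization of the multiplier factor using the cocycle identity at $(h,h^{-1},g)$; from this the identities $e_{g,j}=\lambda_g e_{u,j}$ and $e_{g,j}^*=\lambda_{g^{-1}}^*e_{u,j}^*$ drop out by taking $h=g$ and using $m(u,\cdot)=m(\cdot,u)=1$. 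Finally, the intertwining relations $\theta_gS=S\lambda_g$ and $\lambda_gT=T\theta_g$ reduce on basis vectors and on $X$, respectively, to $\theta_h\theta_g=m(h,g)\,\theta_{hg}$ together with the definition of $T$.

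I expect the main obstacle to be the verification that $\|\cdot\|_Z$ is a genuine norm with unconditional basis $\{e_{g,j}\}$ and continuous coefficient functionals, rather than a mere seminorm on a convenient coordinate system — exactly the same delicate minimality/nondegeneracy issue that arose in the construction of the minimal dilation space $\mathcal{M}_\varphi$ in Section 3, now compounded by the need to track the group action. Once the norm is under control, everything else is a bookkeeping exercise with the multiplier $m$ and the framing reconstruction formula.
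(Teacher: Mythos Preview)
Your proposal is essentially identical to the paper's proof: the same dilation norm $\|\sum a_{g,j}e_{g,j}\|_Z=\sup_E\|\sum_{(g,j)\in E}a_{g,j}\theta_g x_j\|_X$ on finitely supported sums, the same definitions of $S$, $T$, $\lambda$, and the same sequence of verifications (contractivity of $S$, boundedness of $T$ via the unconditional constant, $ST=I_X$, the isometric computation for $\lambda_h$ reducing to $\theta_h$ being an isometry, the cocycle check, and the adjoint and intertwining formulas). One small correction: your nondegeneracy argument via pairing with $\theta_{h^{-1}}^*x_k^*$ does not work as stated since framings need not be biorthogonal, but the simpler observation that the supremum over the singleton $E=\{(h,k)\}$ already gives $\|a_{h,k}\theta_h x_k\|_X=0$, hence $a_{h,k}=0$ (as $\theta_h$ is an isometry and one may assume $x_k\neq 0$), suffices --- and this is exactly what the paper uses.
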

\begin{proof}
First we need to construct our dilation Banach space $Z$ with an
unconditional basis $\{e_{g,j}, e^*_{g,j}\}_{g\in G,j\in\J}$. To do
this, define a new linear space $F(G\times \J)$ by
\[F(G\times\J)=\{f:G\times\J\to\C:f \mbox{ is a function with finite support} \}\]
with the norm on $F(G\times\J)$ given by
\[\|f\|=\sup_{E\subset G\times\J}\Big\|\sum_{(g,j)\in E} f(g,j)\theta_g x_j\Big\|
=\max_{E\subset G\times\J}\Big\|\sum_{(g,j)\in E} f(g,j)\theta_g
x_j\Big\|.\] Let $\{e_{g,j}\}_{g\in G,j\in\J}$ be the natural unit
vectors in $F(G\times\J)$ and let $Z$ denote the completion of
$F(G\times\J)$ under the above norm $\|\cdot\|$. Note that
$\|e_{g,j}\|=\|\theta_g x_j\|>0$ for each $g\in G$ and $j\in\J$.
Thus, it is easy to see that $\{e_{g,j}\}_{g\in G,j\in\J}$ is a
contractive unconditional basis of $Z$. Now, we define a map $T:X\to
Z$ by
\[T(x)=\sum_{g\in G}\sum_{j\in\J}\langle x, \theta_{g^{-1}}^* x^*_j \rangle e_{g,j}
\quad \mbox{ for all } x\in X.\] It is basic to prove that $T$ is
well-defined, which we leave to interested readers. Since
\[C=\sup_{\|x\|\le 1}\sup_{E\subset G\times\J}\Big\|
\sum_{(g,j)\in E}\langle x, \theta_{g^{-1}}^* x^*_j \rangle \theta_g
x_j \Big\|<\infty.\] Then for all $x\in X$,
\[\|T(x)\|=\sup_{E\subset G\times\J}\Big\|
\sum_{(g,j)\in E}\langle x, \theta_{g^{-1}}^* x^*_j \rangle \theta_g
x_j \Big\| \leq C\|x\|.\] It follows that $T$ is bounded with
$\|T\|\le C$. Also, define a map $S:F(G\times\J)\to X$ by
\[S( f )=S\Big(\sum_{g\in G}\sum_{j\in\J} f(g,j) e_{g,j}\Big)=\sum_{g\in G}
\sum_{j\in\J} f(g,j) \theta_g x_j \quad \mbox{ for all } f\in
F(G\times\J).\] That is, $S(e_{g,j})=\theta_g x_j$ for all $g\in G$
and $j\in\J$. Then for any $f\in F(G\times\J)$,
\[\|S(f)\|=\Big\|\sum_{g\in G}
\sum_{j\in\J} f(g,j) \theta_g x_j\Big\| \le\sup_{E\subset
G\times\J}\Big\|\sum_{(g,j)\in E} f(g,j) \theta_g x_j\Big\|
=\|f\|.\] Thus, $S$ is contractive and can be uniquely extended to
$Z$. For any $h\in G$, the map $\lambda_h$ on $F(G\times\J)$ defined
by
\[\lambda_h(f)=\lambda_h\Big(\sum_{g\in G}\sum_{j\in\J} f(g,j) e_{g,j}\Big)
=\sum_{g\in G}\sum_{j\in\J} m(h,g) f(g,j) e_{hg,j} \ \ \mbox{ for
all } f\in F(G\times\J).\] That is, $\lambda_h(e_{g,j})=m(h,g)
e_{hg,j}$ for all $g\in G$ and $j\in\J$. Moreover, we obtain that
\begin{eqnarray*}\label{eq:42}
  \|\lambda_h(f)\| &=& \Big\|\sum_{g\in G}\sum_{j\in\J} m(h,g) f(g,j) e_{hg,j}\Big\| \nonumber \\
  &=&\Big\|\sum_{g\in G}\sum_{j\in\J} m(h,h^{-1}g) f(h^{-1}g,j) e_{g,j}\Big\| \nonumber \\
   &=& \sup_{E\subset G\times\J} \Big\|\sum_{(g,j)\in E} m(h,h^{-1}g) f(h^{-1}g,j) \theta_{g} x_j\Big\| \nonumber \\
    &=& \sup_{E\subset G\times\J} \Big\|\sum_{(hg,j)\in E} m(h,g) f(g,j) \theta_{hg} x_j\Big\| \nonumber\\
   &=& \sup_{E\subset G\times\J} \Big\|\sum_{(g,j)\in E} f(g,j) \theta_{h}\theta_{g} x_j\Big\| \nonumber\\
   &=& \sup_{E\subset G\times\J} \Big\|\theta_{h}\Big(\sum_{(g,j)\in E} f(g,j)\theta_{g} x_j\Big)\Big\|\\
   &=&\sup_{E\subset G\times\J} \Big\|\sum_{(g,j)\in E} f(g,j)\theta_{g} x_j\Big\| \nonumber\\
   &=&\|f\|. \nonumber
\end{eqnarray*}
So $\lambda_h$ is isometric and can be uniquely extended to $Z$.
Clearly, for any $h_1,h_2,g\in G$,
\begin{eqnarray*}\lambda_{h_1}\lambda_{h_2}(e_g)&=&m(h_2,g)\lambda_{h_1}(e_{h_2g})
=m(h_1,h_2g)m(h_2,g)e_{h_1h_2g}\\&=&
m(h_1,h_2)m(h_1h_2,g)e_{h_1h_2g}=
m(h_1,h_2)\lambda_{h_1h_2}(e_g).\end{eqnarray*} It implies that
\[\lambda_{h_1}\lambda_{h_2}=m(h_1,h_2)\lambda_{h_1h_2} \ \ \mbox{
for each } h_1,h_2\in G.\] Thus, $\lambda$ is a projective isometric
representation of $G$ on $Z$ with the same multiplier $m$. Since for
all $x\in X$, we have
\[ST(x)=S\Big( \sum_{g\in G}\sum_{j\in\J} \langle x, \theta_{g^{-1}}^* x^*_j \rangle e_{g,j} \Big)
=\sum_{g\in G} \sum_{j\in\J} \langle x, \theta_{g^{-1}}^* x^*_j
\rangle \theta_{g} x_j=x.\] That is, $ST=I_X,$ which implies that
$T$ is an into isomorphism and that $S$ is surjective. Let
$\{e_{g,j}^*\}_{g\in G,j\in\J}$ be the biorthogonal functionals of
$\{e_{g,j}\}_{g\in G,j\in\J}$. For each $x\in X$, $h\in G$ and
$j\in\J$,
\[\langle x,T^*(e_{h,j}^*)\rangle=\langle T(x),e_{h,j}^*\rangle=
\Big\langle \sum_{g\in G}\sum_{i\in\J} \langle
x,\theta^*_{g^{-1}}x^*_i\rangle e_{g,i},
 e_{h,j}^*\Big\rangle
=\langle x,\theta^*_{h^{-1}} x^*_j \rangle.\] Hence
$T^*(e_{h,j}^*)=\theta^*_{h^{-1}}x^*_j$ for all $h\in G$ and
$j\in\J$. For every $s,h,g\in G$ and $i,j\in\J$,
\begin{eqnarray*}\langle e_{s,i}, \lambda_h^*(e_{g,j}^*) \rangle&=&\langle \lambda_h(e_{s,i}),e_{g,j}^* \rangle
\\&=&m(h,s)\langle e_{hs,i},e_{g,j}^* \rangle\\&=&
m(h,h^{-1}g)\langle e_{s,i},e_{h^{-1}g,j}^* \rangle\\&=&
\langle e_{s,i}, \overline{m(h,h^{-1}g)}e_{h^{-1}g,j}^* \rangle\\
&=&\langle e_{s,i}, m(h^{-1},g)e_{h^{-1}g,j}^* \rangle.
\end{eqnarray*}
Then $\lambda_h(e_{g,j}^*)=m(h^{-1},g)e_{h^{-1}g,j}^*$. That is,
\[\lambda_h^*\Big(\sum_{g\in G}\sum_{j\in\J} b_{g,j} e_{g,j}^*\Big)
=\sum_{g\in G}\sum_{j\in\J} m(h^{-1},g) b_{g,j} e_{h^{-1}g,j}^*=
\sum_{g\in G}\sum_{j\in\J} m(h^{-1},hg) b_{hg,j} e_{g,j}^*.\] Thus,
\[e_{g,j}=m(g, u )e_{g u ,j}=\lambda_g
e_{ u ,j} \ \ \mbox{ and } \ \
e_{g,j}^*=m(g, u )e^*_{g u ,j}=\lambda^*_{g^{-1}}e^*_{ u ,j}\]
for all $g\in G$ and $j\in\J$. Furthermore, we have
\[\theta_h S(e_{g,j})=\theta_h\theta_g x_j=m(h,g)\theta_{hg} x_j
=S(m(h,g)e_{hg,j})=S\lambda_h(e_{g,j}),\] and
\begin{eqnarray*}
  \lambda_h T(x) &=& \lambda_h\Big( \sum_{g\in G}\sum_{j\in\J}
  \langle x, \theta^*_{g^{-1}} x^*_j\rangle e_{g,j}\Big)\\
  &=& \sum_{g\in G}\sum_{j\in\J} m(h,g)\langle x, \theta^*_{g^{-1}} x^*_j\rangle e_{hg,j}\\
  &=& \sum_{g\in G}\sum_{j\in\J} \langle x, \overline{m(h,h^{-1}g)}\theta^*_{({h^{-1}g)}^{-1}} x^*_j\rangle e_{g,j}\\
   &=& \sum_{g\in G}\sum_{j\in\J} \langle x, m(h^{-1},g)\theta^*_{g^{-1}h} x^*_j\rangle e_{g,j}\\
  &=& \sum_{g\in G}\sum_{j\in\J} \langle x, \theta^*_h\theta^*_{g^{-1}} x^*_j\rangle e_{g,j} \\
  &=& \sum_{g\in G}\sum_{j\in\J} \langle \theta_h x, \theta^*_{g^{-1}} x^*_j\rangle e_{g,j}\\
  &=& T\theta_h(x).
\end{eqnarray*}
Thus, for all $h\in G$, we obtain that
\[\theta_h S=S\lambda_h \ \ \mbox{ and } \ \ \lambda_h T= T\theta_h.\]
Then we complete the proof.
\end{proof}

Framings generated by projective isometric representations of discrete groups on Banach spaces appear in various contexts of group representations theory, Gabor/wavelet representations  for function spaces, and $p$-frames for shift-invariant subspaces. For the purpose of demonstration  we include two simple examples at the end of this paper.

\begin{example} Let $X$ be a Banach space and $G$ be a discrete countable group.
Let $I$ be an isometric representation of $G$ on $X$. Assume that
there is $x\in X$ and $f\in X^*$ such that $\{I_g x, I_{g^{-1}}^*
f\}_{g\in G}$ is a framing for $X$. Let $\varphi$ be the induced OVM
defined by
\[\varphi(E)=\sum_{g\in E} I_g x \otimes I_{g^{-1}}^* f\]
for all $E\subset G$. Then $(I,\varphi)$ is an operator-valued isometric system
of imprimitivity. Actually, for any
$g\in G$, $E\subset G$ and $x\in X$, we have
\begin{eqnarray*}
  I_g\varphi(E)I_{g^{-1}}(x) &=&
  I_g\sum_{h\in E} \langle I_{g^{-1}}(x), I_{h^{-1}}^* f \rangle I_h x \\
   &=& \sum_{h\in E} \langle x, I_{g^{-1}}^*I_{h^{-1}}^* f \rangle I_g I_h x \\
   &=& \sum_{h\in E} \langle x, I_{(gh)^{-1}}^* f \rangle I_{gh} x  \\
   &=& \sum_{h\in g E} \langle x, I_{h^{-1}}^* f \rangle I_h x  \\
   &=& \varphi(g E)(x).
\end{eqnarray*}
That is, $I_g\varphi(E)I_{g^{-1}}=\varphi(g E)$ for all $g\in G$ and
$E\subset G$. Thus, $(I,\varphi)$ is an operator-valued isometric system
of imprimitivity. If $\{I_g x, I_{g^{-1}}^* f\}_{g\in G}$ is an unconditional basis of $X$, then
$(I,\varphi)$ is an isometric spectral system of imprimitivity.
\end{example}
\begin{example} Let $1\le p \le \infty$, $\Gamma$ be a countable
set, and let $X$ be a normed linear space and $X^*$ be its dual. We
say that $\{g_\lambda:\lambda\in \Gamma\}\subset X^*$ is a $p$-frame
for $X$ if the map $T$ defined by
\[T:X\ni f\mapsto \{\langle f, g_\lambda \rangle\}_{\lambda\in\Gamma}\in\ell^p(\Gamma),\]
is both bounded and bounded below, i.e., there exists a positive
constant $C$ such that for all $f\in X$
\[C^{-1}\|f\|_X\le \Big(\sum_{\lambda\in\Gamma}|\langle f, g_\lambda \rangle|^p\Big)^{1/p}
\le C\|f\|_X\] for $1\le p<\infty$, and for all $f\in X$
\[C^{-1}\|f\|_X\le \sup_{\lambda\in\Gamma}|\langle f, g_\lambda \rangle|\le C\|f\|_X\]
for $p=\infty$. In [AST], Aldroubi, Sun and Tang derived necessary
and sufficient conditions for an indexed family
$\{\phi_i(\cdot-j):1\le i\le r, j\in\Z^d\}$ to construct a $p$-frame
for the shift invariant space
\[V_p(\Phi)=\left\{\sum_{i=1}^r\sum_{j\in \Z^d}d_i(j)\phi_i(\cdot-j):(
d_i(j))_{j\in\Z^d}\in\ell^p\right\}, \ \ 1\le p\le \infty.\] They
prove that if $\{\phi_i(\cdot-j):1\le i\le r, j\in\Z^d\}$ is a
$p$-frame for $V_p(\Phi)$, then there exists $\{\psi_1,...,\psi_r\}$
such that for all $f\in V_p(\Phi)$
\[f=\sum_{i=1}^r\sum_{j\in\Z^d}\langle f, \psi_i(\cdot-j)\rangle\phi_i(\cdot-j)
=\sum_{i=1}^r\sum_{j\in\Z^d}\langle f,
\phi_i(\cdot-j)\rangle\psi_i(\cdot-j).\] This is a multi-window
framing generated by the isometric representation of the additive
group $\Z^d$ on a shift invariant subspace of $L^p(\R^d)$.
\end{example}

\end{document}